\newtheorem{thm}{Theorem}[section]
\newtheorem{rem}[thm]{Remark}
\numberwithin{equation}{section}
\newcommand{\R}{\mathbb R}
\def\TagOnRight
\def\R {\mathbb{R}}
\newcommand{\be}{\begin{equation}}
\newcommand{\ee}{\end{equation}}
\newcommand{\bea}{\begin{eqnarray}}
\newcommand{\eea}{\end{eqnarray}}
\newcommand{\Bea}{\begin{eqnarray*}}
\newcommand{\Eea}{\end{eqnarray*}}
\newcommand{\bt}{\begin{Theorem}}
\newcommand{\et}{\end{Theorem}}
\newcommand{\bpr}{\begin{Proposition}}
\newcommand{\epr}{\end{Proposition}}
\newcommand{\bl}{\begin{Lemma}}
\newcommand{\el}{\end{Lemma}}
\newcommand{\bi}{\begin{itemize}}
\newcommand{\ei}{\end{itemize}}
\newtheorem{Definition}{Definition}[section]
\newtheorem{Theorem}[Definition]{Theorem}
\newtheorem{Lemma}[Definition]{Lemma}
\newtheorem{Proposition}[Definition]{Proposition}
\begin{document}
\baselineskip16pt

\title[Global$-$wellposedness for FHTE]{Global well-posedness for  fractional  Hartree  equation on modulation spaces and Fourier algebra}
\author{Divyang G. Bhimani}
\address {Department of Mathematics, University of Maryland, College Park, MD 20742}
 \email {dbhimani@math.umd.edu}
\subjclass[2010]{35Q55, 42B35 (primary), 35A01 (secondary)}
\keywords{fractional Hartree equation, global well-posedness, modulation spaces, Fourier algebra}

\maketitle
\begin{abstract} We study the Cauchy problem for  fractional  Schr\"odinger equation with cubic convolution nonlinearity ($i\partial_t u - (-\Delta)^{\frac{\alpha}{2}}u\pm (K\ast |u|^2) u =0$) with Cauchy data in the modulation spaces $M^{p,q}(\mathbb R^{d}).$ For $K(x)= |x|^{-\gamma}$ $ (0< \gamma< \text{min} \{\alpha, d/2\})$, we establish global well-posedness results in $M^{p,q}(\mathbb R^{d}) (1\leq p \leq 2, 1\leq q < 2d/ (d+\gamma))$   when $\alpha =2, d\geq 1$,  and with radial Cauchy data when  $d\geq 2, \frac{2d}{2d-1}< \alpha < 2.
$ Similar results are proven in  Fourier algebra $\mathcal{F}L^1(\mathbb R^d) \cap L^2(\mathbb R^d).$
\end{abstract}
\section{Introduction}
We study fractional nonlinear Schr\"odinger equation with cubic convolution non linearity
\begin{eqnarray}\label{fHTE}
i\partial_t u - (-\Delta)^{\frac{\alpha}{2}}u= (K\ast |u|^2) u, u(x,0)= u_0(x)   
\end{eqnarray}
where $u:\mathbb R_t \times \mathbb R_x^d \to \mathbb C, u_0:\mathbb R^d \to \mathbb C,$ $K$ 
denotes the Hartree kernel
\begin{eqnarray}\label{hk}
\   \ \ \ \ \ \ \ \ \ \ \ \  \ \ \ \   \   \  \   \ K(x)= \frac{\lambda}{|x|^{\gamma}}, (\lambda \in \mathbb R, \gamma >0, x\in \mathbb R^{d}),
\end{eqnarray}
 and $\ast$ denotes the convolution in $\mathbb R^d.$ The fractional Laplacian is defined as 
\begin{eqnarray}
\mathcal{F}[(-\Delta)^{\alpha/2}u] (\xi) = |\xi|^{\alpha} \mathcal{F}u (\xi)
\end{eqnarray}
where $\mathcal{F}$ denotes the Fourier transform.  The equation \eqref{fHTE}  is known as the Hartree type equation with fractional Laplacian, and  we call it fractional Hartree equation (FHTE).

In recent years, there has been a great deal of interest \cite{cabre, laskin} in using fractional Laplacians to model physical phenomena.  It was formulated by N. Laskin \cite{laskin} as a result of extending the Feynman path integral from the Brownian-like to L\'evy-like quantum mechanical paths.  Specifically,  when  $\alpha = 1$, \eqref{fHTE} can be used to describe the dynamics of pseudo-relativistic boson stars in the mean-field limit, and  when $\alpha =2$ the L\'evy motion becomes Brownian motion. Thus,  in  tbe last couple of years many authors have studied the  Cauchy problem  for  fractional Hartree equation with Cauchy data in Sobolev spaces, see  for instance \cite{4a, gz} and the reference therein. 

On the other hand,  there has been a lot of ongoing interest to study Cauchy problem  for nonlinear dispersive  equations with the Cauchy data in modulation spaces  $M^{p,q}(\mathbb R^d)$ (See Definition \ref{ms} below).  Generally speaking, the Cauchy data in a modulation spaces are rougher than  any given one in a  fractional Bessel potential space and this low regularity is desirable in many situations. It is well known that  \cite{chen, benyi, bao} Schr\"odinger  propogator $e^{-it (-\Delta)^{\alpha/2}}$ is bounded on $M^{p,q}(\mathbb R^d)$ but most authors have studied  local \cite{ambenyi} and global \cite{wang2}  well-posedness (for small data)  for a  nonlinear dispersive  equations with Cauchy data in $M^{p,1}(\mathbb R^d).$   As one advantage to consider Cauchy data in $M^{p,1}(\mathbb R^d)$ is the fact that it is an algebra under point wise multiplication. There has not been  much progress to consider the Cauchy data in the  modulation spaces $M^{p,q}   (1<q\leq \infty).$  However, there has been an attempt to get  global well-posedness  results for a large data in modulation spaces but yet we do not know the complete answer to this, see for instance, open question raised by Baoxiang-Ruzhansky-Sugimoto \cite[p.280]{rsw}.\\

Taking these considerations into  account, in this article, first  we study  global well-posedness result for  fractional Hartree  equation \eqref{fHTE}  with Cauchy data in modulation spaces $M^{p,q}(\mathbb R^d)$ (without any smallness assumption  for the Cauchy data).   We could include family of Banach spaces which  may not be algebra under point wise multiplication, say $M^{p,q}(\mathbb R^d) (q\neq 1).$  We denote the Banach space $X$ of radial functions by $X_{rad}.$  Specifically, we  obtain following results.

\begin{Theorem}\label{MT}
Let $K$ be given by \eqref{hk} with  $0 < \gamma < \min\{\alpha, d/2\},$ and  $ 1\leq p \leq  2, 1\leq q < \frac{2d}{d+ \gamma}.$
\begin{enumerate}
\item \label{MTF} Assume that $u_{0}\in M_{rad}^{p,q}(\mathbb R^{d}), $  $d\geq 2, \frac{2d}{2d-1}< \alpha < 2.$  Then
 there exists a  unique global solution of \eqref{fHTE} such that $$u\in C(\mathbb R, M_{rad}^{p,q}(\mathbb R^{d})) \cap L^{4\alpha/\gamma}_{loc}(\mathbb R, L^{4d/(2d-\gamma)} (\mathbb R^d)).$$
 \item \label{MTS} Assume that $u_{0}\in M^{p,q}(\mathbb R^{d}), d\in \mathbb N, \alpha =2.$ Then
 there exists a  unique global solution of \eqref{fHTE} such that $$u\in C(\mathbb R, M^{p,q}(\mathbb R^{d})) \cap L^{8/\gamma}_{loc}(\mathbb R, L^{4d/(2d-\gamma)} (\mathbb R^d)) .$$
\end{enumerate}
\end{Theorem}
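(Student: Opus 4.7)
The strategy is a local-to-global argument: first establish local well-posedness of \eqref{fHTE} in $M^{p,q}(\R^d)$ by a contraction-mapping argument applied to the Duhamel formulation
$$u(t) = U_\alpha(t)u_0 - i\int_0^t U_\alpha(t-s)(K\ast|u|^2)u(s)\, ds, \qquad U_\alpha(t) = e^{-it(-\Delta)^{\alpha/2}},$$
and then upgrade the local solution to a global one by exploiting mass conservation $\|u(t)\|_{L^2}=\|u_0\|_{L^2}$ together with the embedding $M^{p,q}(\R^d) \hookrightarrow L^2(\R^d)$, which is valid under our hypothesis since $1\leq p\leq 2$ and $q<\tfrac{2d}{d+\gamma}\leq 2$. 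The natural resolution space is
$$X_T = C([-T,T], M^{p,q}(\R^d)) \cap L^{4\alpha/\gamma}([-T,T], L^{4d/(2d-\gamma)}(\R^d)),$$
the second factor being dictated by the Strichartz-admissible pair $(4\alpha/\gamma, 4d/(2d-\gamma))$, which satisfies the scaling identity $\alpha/q + d/r = d/2$.

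Three ingredients drive the local theory. First, the known boundedness of $U_\alpha(t)$ on $M^{p,q}$ (alluded to in the introduction) furnishes the linear estimate for the modulation component of the norm. Second, $L^2$-based Strichartz estimates control the $L^{4\alpha/\gamma}_T L^{4d/(2d-\gamma)}_x$ norms of both $U_\alpha(t)u_0$ and the Duhamel term: for part (2) with $\alpha=2$ these are the classical Keel--Tao estimates, while for part (1) with $\alpha\in(\tfrac{2d}{2d-1},2)$ one invokes the radial Strichartz inequalities, which are valid precisely in this range of $\alpha$ and require the radial hypothesis on the data. Third, and most delicately, one needs a trilinear Hartree estimate of the form
$$\left\|\int_0^t U_\alpha(t-s)(K\ast|u|^2)u(s)\, ds\right\|_{L^\infty_T M^{p,q}} \lesssim \|u\|_{L^{4\alpha/\gamma}_T L^{4d/(2d-\gamma)}_x}^2 \, \|u\|_{L^\infty_T M^{p,q}},$$
obtained by first applying the Hardy--Littlewood--Sobolev inequality (legal since $0<\gamma<d/2$) to place $K\ast|u|^2$ into a suitable Lebesgue space, and then invoking a pointwise-multiplier estimate for $M^{p,q}$ through an intermediate Wiener-algebra / $\mathcal F L^1$-type norm. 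The \emph{linearity} of the right-hand side in $\|u\|_{L^\infty_T M^{p,q}}$ is the essential feature that permits globalization.

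Globalization is then almost automatic. The $L^2$-embedding and mass conservation give $\|u(t)\|_{L^2}\leq C\|u_0\|_{M^{p,q}}$ for every $t$; the standard $L^2$-subcritical theory for \eqref{fHTE} (subcritical because $\gamma<\alpha$) together with Strichartz then produces an a priori bound $\|u\|_{L^{4\alpha/\gamma}([T_1,T_2],L^{4d/(2d-\gamma)})}\leq C(T_2-T_1,\|u_0\|_{L^2})$ on every finite time interval; feeding this back into the Duhamel identity for the $M^{p,q}$ norm and invoking Gronwall's inequality rules out finite-time blow-up of $\|u(t)\|_{M^{p,q}}$ and yields a global solution. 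The radial case is handled in exactly the same way: both $(-\Delta)^{\alpha/2}$ and $K\ast|\cdot|^2$ commute with rotations, so the radial class is preserved by the flow and the fixed-point argument in part (1) can be carried out inside $M^{p,q}_{rad}(\R^d)$ using the radial Strichartz estimates.

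The principal obstacle is the trilinear estimate above. Because $M^{p,q}$ with $q>1$ is not a pointwise algebra, no direct inequality of the form $\|u_1u_2u_3\|_{M^{p,q}}\lesssim \prod\|u_i\|_{M^{p,q}}$ is available, and one instead has to identify a pointwise-multiplier class for $M^{p,q}$ into which HLS places $K\ast|u|^2$ with norm controlled by $\|u\|_{L^{4d/(2d-\gamma)}}^2$. Balancing the HLS exponents against the multiplier-class requirement so that both steps close simultaneously is the technical heart of the argument, and it is precisely this balance that forces the sharp upper bound $q<\tfrac{2d}{d+\gamma}$ on the second modulation-space index.
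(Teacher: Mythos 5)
Your overall architecture --- local contraction in $M^{p,q}$, then globalization by making the Duhamel estimate \emph{linear} in $\|u\|_{M^{p,q}}$ with a time-integrable coefficient supplied by the $L^2$ Strichartz theory, closed by Gronwall --- is exactly the paper's strategy, and your remarks on mass conservation, the embedding $M^{p,q}\hookrightarrow L^2$, the role of the radial Strichartz estimates for $\tfrac{2d}{2d-1}<\alpha<2$, and preservation of radiality are all correct. The genuine gap is in the step you yourself identify as the technical heart: the trilinear estimate. You propose to control $K\ast|u|^2$ in a pointwise-multiplier class for $M^{p,q}$ by ``first applying Hardy--Littlewood--Sobolev to place $K\ast|u|^2$ into a suitable Lebesgue space.'' This cannot work: HLS applied with your pair puts $K\ast|u|^2$ into $L^{2d/\gamma}_x$, and no Lebesgue bound on $K\ast|u|^2$ implies membership in $\mathcal{F}L^1$ or $M^{\infty,1}$, which are the multiplier classes one actually needs for the product estimate $\|fg\|_{M^{p,q}}\lesssim\|f\|_{\mathcal{F}L^1}\|g\|_{M^{p,q}}$. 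The paper's mechanism is different: it uses the explicit formula $\widehat{K}(\xi)=c\lambda|\xi|^{-(d-\gamma)}$ and splits $\widehat{K}=k_1+k_2$ with $k_1=\chi_{\{|\xi|\le 1\}}\widehat{K}\in L^1$ and $k_2\in L^{r}$ for every $r>\tfrac{d}{d-\gamma}$, so that $\|K\ast|u|^2\|_{\mathcal{F}L^1}\lesssim \|k_1\|_{L^1}\|u\|_{L^2}^2+\|k_2\|_{L^{r}}\|u\|_{L^{2r}}^2$ by H\"older and Hausdorff--Young. The low-frequency piece is then absorbed by mass conservation, and the high-frequency piece by the Strichartz norm of the admissible pair $(2\beta,2r)$. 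Your coefficient $\|u\|^2_{L^{4d/(2d-\gamma)}}$ is quantitatively too weak for this: one needs $2r>\tfrac{2d}{d-\gamma}>\tfrac{4d}{2d-\gamma}$, so the pair $(4\alpha/\gamma,\,4d/(2d-\gamma))$ cannot serve as the coefficient norm; in the paper it only appears in setting up the $L^2$ theory (where HLS genuinely is used, and nowhere else).

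Two further consequences of this misidentification. First, the constraint $q<\tfrac{2d}{d+\gamma}$ does not come from ``balancing HLS exponents''; it comes from the chain $\|\widehat{f}\ast\widehat{\bar f}\|_{L^{r'}}\lesssim\|\widehat{f}\|_{L^{r_1}}^2$ (Young) together with $M^{p,q}\hookrightarrow M^{2,r_1}\hookrightarrow\mathcal{F}L^{r_1}$, which requires $q\le r_1=\tfrac{2r}{2r-1}$; letting $r\downarrow\tfrac{d}{d-\gamma}$ gives precisely $r_1\uparrow\tfrac{2d}{d+\gamma}$. Second, the same decomposition yields the purely cubic bound $\|(K\ast|f|^2)f\|_{M^{p,q}}\lesssim\|f\|_{M^{p,q}}^3$ (the paper's Lemma 4.2), so the local theory in $M^{p,q}$ needs no Strichartz component at all and can be run in $C([0,T],M^{p,q})$ alone; the intersection space $X_T$ you introduce is only needed for the separate $L^2$ well-posedness statement. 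As written, your proof of the central estimate would not close, though the surrounding scaffolding is sound.
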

We note that   $M^{p,q}(\mathbb R^d)  \subset L^p(\mathbb R^d) (q \leq \min \{p, p'\})$ is sharp embedding and  up to now we cannot get the  global well-posedness of   fractional Hartree equation  in $L^p(\mathbb R^d) (1\leq p < 2)$ but in $M^{p,q}(\mathbb R^d)$ (see Theorem \ref{MT}).  We also note that recently  the author \cite[Theorem 1.1]{dgb} have proved the global well-posedness result  for the classical Hartree equation (that is  \eqref{fHTE} with $\alpha =2$)  with Cauchy date  in $M^{1,1}(\mathbb R^d),$ and this result  has been extended by  Manna \cite[Theorem 1.1]{rm} with Cauchy data in $M^{p,p}(\mathbb R^d)$ for $1\leq p < \frac{2d}{d+\gamma}$. Thus,  our Theorem \ref{MT}\eqref{MTS} extends the existing results in the literature 
so far.  It is also worth noting following
\begin{rem} The  nonlinear Schr\"odinger equation (NLS) 
\begin{eqnarray*}
i\partial_{t}u+ \Delta u = |u|^2 u,
\end{eqnarray*}
 is locally \cite{ambenyi} well-posed in $M^{p,1}(\mathbb R^d)$ but it is not yet clear whether  it is  globally well-posed or not (see for instance \cite[p.280]{rsw}). Also, it is not clear whether NLS  is locally wellposed in $M^{p,q}(\mathbb R^d) (q\neq 1).$ But in contrast fractional Hartree equation is  globally well-posed in $M^{p,q}(\mathbb R^d)$ (see Theorem \ref{MT}).
\end{rem}

Next we study  fractional Hartree equation   \eqref{fHTE} with  Cauchy data in  Fourier algebra $\mathcal{F}L^1(\mathbb R^d)$ (See Section \ref{np} below) and the space of  square integrable functions. We note that for $s\geq d/2,$ $H^{s}(\mathbb R^d) \hookrightarrow  \mathcal{F}L^{1}(\mathbb R^d) \hookrightarrow M^{\infty, 1}(\mathbb R^d).$  We recall recently  Carles-Mouzaoui \cite{carle} have proved global well-posedness result for classical Hartree  equation  (that is \eqref{fHTE} with $\alpha =2$)  with Cauchy data in  $L^2(\mathbb R^d) \cap \mathcal{F}L^1(\mathbb R^d)$.  We extend this result for the fractional Hartree equation. Specifically, we prove following
\begin{Theorem}\label{jcm}  Let $d\geq 2, \frac{2d}{2d-1}< \alpha < 2, 0 < \gamma < \min\{\alpha, d/2\}.$ Assume that $u_{0}\in L_{rad}^{2}(\mathbb R^{d}) \cap \mathcal{F}L^1(\mathbb R^d)$, and let $K$ be given by \eqref{hk} with $\lambda \in \mathbb R.$  Then
 there exists a  unique global solution of \eqref{fHTE} such that $$u\in C(\mathbb R, L^2_{rad} (\mathbb R^d)) \cap  C(\mathbb R, \mathcal{F}L^1 (\mathbb R^d)) \cap L^{4\alpha/\gamma}_{loc}(\mathbb R, L^{4d/(2d-\gamma)} (\mathbb R^d)).$$
\end{Theorem}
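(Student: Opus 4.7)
\emph{Proof proposal.} The plan is to mirror the Carles--Mouzaoui scheme for the classical case $\alpha=2$, adapted via the radial Strichartz machinery for the fractional Schr\"odinger group that underlies Theorem \ref{MT}\eqref{MTF}. Concretely, I would solve Duhamel's formula
\[
u(t) = e^{-it(-\Delta)^{\alpha/2}}u_{0} - i \int_0^t e^{-i(t-s)(-\Delta)^{\alpha/2}} (K\ast|u|^2)(s)\,u(s)\,ds
\]
by a contraction in the resolution space
\[
X_T = C([-T,T];\,L^2_{\mathrm{rad}}(\mathbb R^d)\cap \mathcal{F}L^1(\mathbb R^d)) \cap L^{4\alpha/\gamma}([-T,T];\,L^{4d/(2d-\gamma)}(\mathbb R^d)),
\]
then extend globally via mass conservation together with an a priori Gronwall bound on $\|u(t)\|_{\mathcal{F}L^1}$.

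The local theory rests on three ingredients: the radial Strichartz estimates for $U(t)=e^{-it(-\Delta)^{\alpha/2}}$ at the $\alpha$-admissible pair $(q,r)=(4\alpha/\gamma,\,4d/(2d-\gamma))$, valid precisely under $d\geq 2$ and $\tfrac{2d}{2d-1}<\alpha<2$; the fact that $U(t)$ is an isometry on $\mathcal{F}L^1$, since its symbol $e^{-it|\xi|^\alpha}$ has modulus one; and the Banach algebra property $\|fg\|_{\mathcal{F}L^1}\leq \|f\|_{\mathcal{F}L^1}\|g\|_{\mathcal{F}L^1}$. Hardy--Littlewood--Sobolev (valid because $0<\gamma<d/2$) yields $\|K\ast|u|^2\|_{L^{2d/\gamma}}\leq C\|u\|_{L^r}^2$, H\"older then gives $\|(K\ast|u|^2)u\|_{L^{r'}}\leq C\|u\|_{L^r}^3$, and time integration using $\gamma<\alpha$ produces a positive power of $T$, closing the contraction on the $L^2\cap L^q_t L^r_x$ components of $X_T$. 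For the $\mathcal{F}L^1$ component the algebra property reduces matters to controlling $\|K\ast|u|^2\|_{\mathcal{F}L^1}=\bigl\||\xi|^{\gamma-d}\widehat{|u|^2}(\xi)\bigr\|_{L^1}$. Splitting the frequency domain into $\{|\xi|\leq 1\}$ and $\{|\xi|>1\}$, the low-frequency piece is bounded by $C\|u\|_{L^2}^2$ (using $|\widehat{|u|^2}|\leq \|u\|_{L^2}^2$ and the local integrability of $|\xi|^{\gamma-d}$ at the origin, for which only $\gamma>0$ is needed), while the high-frequency piece, by Cauchy--Schwarz and Plancherel, is bounded by $C\|u\|_{L^4}^2$, where the hypothesis $\gamma<d/2$ is exactly what makes $|\xi|^{2(\gamma-d)}$ integrable at infinity. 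This yields the pointwise-in-time estimate
\[
\|(K\ast|u|^2)u\|_{\mathcal{F}L^1}\leq C\bigl(\|u\|_{L^2}^2+\|u\|_{L^4}^2\bigr)\|u\|_{\mathcal{F}L^1}.
\]

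For globalization, mass conservation $\|u(t)\|_{L^2}=\|u_0\|_{L^2}$ (obtained in the standard way by pairing \eqref{fHTE} with $\bar u$) combined with the fact that the contraction time for the $L^2\cap L^q_t L^r_x$ subsystem depends only on $\|u_0\|_{L^2}$ allows iteration to produce $u\in C(\mathbb R;L^2_{\mathrm{rad}})\cap L^{4\alpha/\gamma}_{\mathrm{loc}}(\mathbb R; L^{4d/(2d-\gamma)})$ without any $\mathcal{F}L^1$ assumption. An auxiliary radial Strichartz pair with spatial exponent $4$ --- or, when such a pair is not admissible for the given $(d,\alpha)$, a nearby admissible exponent $2a\in(2d/(d-\gamma),4]$ substituted into the Cauchy--Schwarz step above (which tolerates any such $a$) --- then furnishes a finite $T$-dependent bound for $\int_0^T\|u(s)\|_{L^4}^2\,ds$, and Gronwall applied to the displayed $\mathcal{F}L^1$ inequality produces $\|u(t)\|_{\mathcal{F}L^1}\leq \|u_0\|_{\mathcal{F}L^1}\exp\bigl(Ct+C\int_0^t\|u(s)\|_{L^4}^2\,ds\bigr)$, finite on every finite time interval. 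The main obstacle is precisely this $\mathcal{F}L^1$ propagation: a naive application of $\mathcal{F}L^1$-algebra to $K\ast|u|^2$ produces a cubic integral inequality that cannot in general be excluded from blowing up in finite time, and trading one $\mathcal{F}L^1$ factor for a Plancherel-controlled $L^4$ (or $L^{2a}$) factor through the frequency split --- the only place where the restriction $\gamma<d/2$ is genuinely used --- is the decisive step converting the inequality into a Gronwall-admissible linear one.
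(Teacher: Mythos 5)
Your proposal is correct and follows essentially the same route as the paper: local well-posedness by contraction using the $\mathcal{F}L^1$ algebra property, the unitarity of $e^{it(-\Delta)^{\alpha/2}}$ on $\mathcal{F}L^1$, and the radial Strichartz estimates; globalization from mass conservation plus an a priori Gronwall bound on $\|u(t)\|_{\mathcal{F}L^1}$ obtained by splitting $\widehat{K}$ into the integrable low-frequency piece (controlled by the conserved $L^2$ norm) and the $L^t$ high-frequency piece (controlled via Hausdorff--Young/Plancherel by a Strichartz-bounded $L^{2t}$ norm with $2t\in(2d/(d-\gamma),4]$, using $\gamma<\min\{\alpha,d/2\}$ exactly as you indicate). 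The only differences from the paper's write-up are cosmetic: the paper runs the local contraction in $C([0,T],L^2\cap\mathcal{F}L^1)$ alone and phrases the high-frequency bound as H\"older plus Hausdorff--Young for a general exponent rather than Cauchy--Schwarz at the $L^4$ endpoint with a fallback.
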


This article is organized as follows. We will give basic notations and tools that will be used  throughout this article, in Section  \ref{np}.  This includes all definitions  of function spaces  and some of their properties, and known lemmas. Section \ref{smt} is devoted to proving  result  concerning global well-posedness result for the Cauchy data in $L^2(\mathbb R^d).$ We shall see this will turn out to be  one of the main tools to obtain global well-posedness results in the realm of modulation spaces and Fourier algebra. In Section \ref{pmr}, we shall prove  our main  Theorems \ref{MT} and \ref{jcm}.
\section{Notations and Preliminaries} \label{np} 
\subsection{Notations} The notation $A \lesssim B $ means $A \leq cB$ for a some constant $c > 0 $, whereas $ A \asymp B $ means $c^{-1}A\leq B\leq cA $ for some $c\geq 1$. The symbol $A_{1}\hookrightarrow A_{2}$ denotes the continuous embedding  of the topological linear space $A_{1}$ into $A_{2}.$ Let $I\subset \mathbb R$ be an interval. Then the norm of the space-time Lebesgue space $L^{p}(I, L^q(\mathbb R^d))$ is defined by
$$\|u\|_{L^p(I, L^q(\mathbb R^d))}=\|u\|_{L^{p}_I L^q_x}= \left(\int_{I} \|u(t)\|^p_{L^q_x} dt \right)^{1/p}.$$
If there is no confusion, we simply write
$$\|u\|_{L^p(I, L^q)}=\|u\|_{L^{p}_I L^q_x}=\|u\|_{L^{p,q}_{t,x}}.$$
The Schwartz space is denoted by  $\mathcal{S}(\mathbb R^{d})$ (with its usual topology), and the space of tempered distributions is  denoted by $\mathcal{S'}(\mathbb R^{d}).$ For $x=(x_1,\cdots, x_d), y=(y_1,\cdots, y_d) \in \mathbb R^d, $ we put $x\cdot y = \sum_{i=1}^{d} x_i y_i.$
Let $\mathcal{F}:\mathcal{S}(\mathbb R^{d})\to \mathcal{S}(\mathbb R^{d})$ be the Fourier transform  defined by  
\begin{eqnarray}
\mathcal{F}f(w)=\widehat{f}(w)=\int_{\mathbb R^{d}} f(t) e^{- 2\pi i t\cdot w}dt, \  w\in \mathbb R^d.
\end{eqnarray}
Then $\mathcal{F}$ is a bijection  and the inverse Fourier transform  is given by
\begin{eqnarray}
\mathcal{F}^{-1}f(x)=f^{\vee}(x)=\int_{\mathbb R^{d}} f(w)\, e^{2\pi i x\cdot w} dw,~~x\in \mathbb R^{d},
\end{eqnarray}
and this Fourier transform can be uniquely extended to $\mathcal{F}:\mathcal{S}'(\mathbb R^d) \to \mathcal{S}'(\mathbb R^d).$ We denote by $\mathcal{F}L^p(\mathbb R^d)$ the Fourier-Lebesgue space, specifically,
$$\mathcal{F}L^p(\mathbb R^d)= \{f\in \mathcal{S}'(\mathbb R^d): \hat{f}\in L^{p}(\mathbb R^d) \}.$$
The  $\mathcal{F}L^{p} (\mathbb R^d)-$norm is denoted by
 $$\|f\|_{\mathcal{F}L^{p}}= \|\hat{f}\|_{L^{p}} \ (f\in \mathcal{F}L^{p}(\mathbb R^{d})).$$
If $s\in \mathbb R,$ we put  $\langle \xi \rangle^{s} = (1+ |\xi|^2)^{s/2} \ (\xi \in \mathbb R^d),$ and define Sobolev space $H^s(\mathbb R^d)$ to be 
$$H^s(\mathbb R^d)=\{f\in \mathcal{S}'(\mathbb R^d): \mathcal{F}^{-1} [\langle \cdot \rangle \mathcal{F}(f)] \in L^2(\mathbb R^d) \}.$$

\subsection{Modulation  spaces}
In 1983, Feichtinger \cite{HG4} introduced a  class of Banach spaces,  the so called modulation spaces,  which allow a measurement of space variable and Fourier transform variable of a function or distribution on $\mathbb R^d$ simultaneously using the short-time Fourier transform(STFT). The  STFT  of a function $f$ with respect to a window function $g \in {\mathcal S}(\R^d)$ is defined by
\begin{eqnarray}\label{stft}
V_{g}f(x,w)= \int_{\mathbb R^{d}} f(t) \overline{g(t-x)} e^{-2\pi i w\cdot t}dt,  \  (x, w) \in \mathbb R^{2d}
\end{eqnarray}
 whenever the integral exists.
For $x, w \in \R^d$ the translation operator $T_x$ and the modulation operator $M_w$ are
defined by $T_{x}f(t)= f(t-x)$ and $M_{w}f(t)= e^{2 \pi i w\cdot t} f(t).$ In terms of these
operators the STFT may be expressed as
\begin{eqnarray}
\label{ipform} V_{g}f(x,y)=\langle f, M_{y}T_{x}g\rangle = e^{-2\pi i x\cdot y} (f\ast M_yg^*)(x)
\end{eqnarray}
 where $g^*(y)= \overline{g(-y)}$  and $\langle f, g\rangle$ denotes the inner product for $L^2$ functions,
or the action of the tempered distribution $f$ on the Schwartz class function $g$.  Thus $V: (f,g) \to V_g(f)$ extends to a bilinear form on $\mathcal{S}'(\R^d) \times \mathcal{S}(\R^d)$ and $V_g(f)$ defines a uniformly continuous function on $\R^{d} \times \R^d$ whenever $f \in \mathcal{S}'(\R^d) $ and $g \in  \mathcal{S}(\R^d)$.
\begin{Definition}[modulation spaces]\label{ms} Let $1 \leq p,q \leq \infty, s\in \mathbb R$ and $0\neq g \in{\mathcal S}(\R^d)$. The  weighted  modulation space   $M_s^{p,q}(\R^d)$
is defined to be the space of all tempered distributions $f$ for which the following  norm is finite:
$$ \|f\|_{M_s^{p,q}}=  \left(\int_{\R^d}\left(\int_{\R^d} |V_{g}f(x,y)|^{p} dx\right)^{q/p} \langle y \rangle^{sq}  \, dy\right)^{1/q},$$ for $ 1 \leq p,q <\infty$. If $p$ or $q$ is infinite, $\|f\|_{M_s^{p,q}}$ is defined by replacing the corresponding integral by the essential supremum. 
\end{Definition}

\begin{rem}
\label{equidm}
The definition of the modulation space given above, is independent of the choice of 
the particular window function.  See \cite[Proposition 11.3.2(c), p.233]{gro}.  If $s=0,$ we simply write $M^{p,q}_s= M^{p,q}.$
\end{rem}

In the next lemma we  collect basic properties of modulation spaces which we shall need later.
 
\begin{Lemma}\label{rl} Let $p,q, p_{i}, q_{i}\in [1, \infty]$  $(i=1,2), s_1, s_2 \in \mathbb R.$ Then
\begin{enumerate}
\item \label{ir} $M_{s_1}^{p_{1}, q_{1}}(\mathbb R^{d}) \hookrightarrow M_{s_2}^{p_{2}, q_{2}}(\mathbb R^{d})$ whenever $p_{1}\leq p_{2},q_{1}\leq q_{2},$ and $ s_2 \leq s_1.$

\item \label{el} $M^{p,q_{1}}(\mathbb R^{d}) \hookrightarrow L^{p}(\mathbb R^{d}) \hookrightarrow M^{p,q_{2}}(\mathbb R^{d})$ holds for $q_{1}\leq \text{min} \{p, p'\}$ and $q_{2}\geq \text{max} \{p, p'\}$ with $\frac{1}{p}+\frac{1}{p'}=1.$
\item \label{rcs} $M^{\min\{p', 2\}, p}(\mathbb R^d) \hookrightarrow \mathcal{F} L^{p}(\mathbb R^d)\hookrightarrow M^{\max \{p',2\},p}(\mathbb R^d),  \frac{1}{p}+\frac{1}{p'}=1.$
\item $\mathcal{S}(\mathbb R^{d})$ is dense in  $M^{p,q}(\mathbb R^{d})$ if $p$ and $q<\infty.$
\item \label{fi} The Fourier transform $\mathcal{F}:M^{p,p}(\mathbb R^{d})\to M^{p,p}(\mathbb R^{d})$ is an isomorphism.
\item The space  $M^{p,q}(\mathbb R^{d})$ is a  Banach space.
\item \label{ic}The space $M^{p,q}(\mathbb R^{d})$ is invariant under complex conjugation.
\end{enumerate}
\end{Lemma}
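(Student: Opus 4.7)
The statement collects seven well-known structural facts about (weighted) modulation spaces. My plan is to reduce everything to properties of the short-time Fourier transform $V_g$ and then invoke standard inequalities (Young, Hausdorff--Young) and the reproducing/covariance identities for $V_g$. Throughout I would fix a single Gaussian window $g\in \mathcal{S}(\mathbb{R}^d)$ (permissible by the window-invariance Remark \ref{equidm}), so that $V_g g\in \mathcal{S}(\mathbb{R}^{2d})$ is rapidly decreasing in both the time and frequency variables. The backbone identity is the reproducing formula
\[
V_g f(x,\omega)=\tfrac{1}{\|g\|_2^2}\,(V_g f \,\natural\, V_g g)(x,\omega),
\]
where $\natural$ denotes the twisted convolution on $\mathbb{R}^{2d}$, together with the intertwining relation $V_{\widehat g}\widehat f(x,\omega)=e^{-2\pi i x\cdot\omega}V_g f(-\omega,x)$ and the conjugation relation $V_g(\overline f)(x,\omega)=\overline{V_{\overline g}f(x,-\omega)}$.

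For item (1), the weight part is immediate because $s_2\leq s_1$ yields $\langle\omega\rangle^{s_2}\leq\langle\omega\rangle^{s_1}$ pointwise. For the exponent part, the monotonicity $p_1\leq p_2$, $q_1\leq q_2$ does not follow from pointwise $L^p$ nesting (since $\mathbb{R}^{2d}$ has infinite measure), so I would use the reproducing formula combined with Young's inequality for the twisted convolution together with the Schwartz decay of $V_g g$ to promote $L^{p_1,q_1}$ integrability of $V_g f$ to $L^{p_2,q_2}$ (this is essentially Gröchenig, Prop.~11.3.2(d)). Item (6) then follows because the STFT is an isometric embedding of $M^{p,q}_s$ into the weighted mixed-norm Lebesgue space $L^{p,q}_{\langle\omega\rangle^s}(\mathbb{R}^{2d})$, which is complete, so a Cauchy sequence in $M^{p,q}_s$ has a limit in the image whose preimage lies in $M^{p,q}_s$ by the inversion formula.

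For item (2), I would expand $V_g f(x,\omega)=\mathcal{F}_{t\to\omega}(f\cdot T_x\overline g)(\omega)$ and apply Hausdorff--Young in $\omega$ (with Minkowski's integral inequality used to exchange the order of the $x$- and $\omega$-integrals) to get the embeddings $L^p\hookrightarrow M^{p,\max\{p,p'\}}$; the reverse embedding $M^{p,\min\{p,p'\}}\hookrightarrow L^p$ is obtained dually by writing $f$ through the inversion formula $f=\|g\|_2^{-2}\int\!\!\int V_g f(x,\omega)\,M_\omega T_x g\,dx\,d\omega$ and applying Minkowski and Hausdorff--Young once more. Item (3) is then obtained by applying item (2) to $\widehat f$ and using that Fourier transformation swaps the two STFT variables via the intertwining identity above, which exchanges the roles of $p$ and $q$. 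Item (5) is immediate from the same intertwining identity: after swapping variables the $M^{p,p}$-norm is manifestly invariant, since the two Lebesgue exponents are equal.

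Item (4) follows by approximating $f\in M^{p,q}$ ($p,q<\infty$) in two steps: truncate $V_g f$ to a large compact set in $\mathbb{R}^{2d}$ and mollify, then use the inversion formula to recover a Schwartz approximant; convergence in $M^{p,q}$ follows from dominated convergence in $L^{p,q}(\mathbb{R}^{2d})$, which needs $p,q<\infty$. Item (7) is the shortest: apply the conjugation identity for $V_g$ and replace the window $\overline g$ by $g$ using window independence. The main technical obstacle is item (1) (and consequently (6)), since the embedding is not a trivial pointwise $L^{p,q}$ nesting, and one must carefully justify the twisted convolution Young inequality; the other items reduce cleanly once the STFT machinery and the reproducing/covariance identities are in hand.
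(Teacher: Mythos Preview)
Your proposal is essentially correct and considerably more detailed than the paper's own treatment: the paper does not prove Lemma~\ref{rl} at all but simply declares the statements well known, citing \cite{gro,HG4,baob} globally and \cite[Theorem 12.2.2]{gro}, \cite{nonom}, \cite[Corollary 1.1]{cks} for items (1), (2), (3) respectively; for (5) it records the fundamental identity $V_gf(x,w)=e^{-2\pi i x\cdot w}V_{\widehat g}\widehat f(w,-x)$ (exactly your intertwining relation), and for (7) it just asserts $\|f\|_{M^{p,q}}=\|\bar f\|_{M^{p,q}}$. So your sketch is not a different route so much as an actual route where the paper gives none; in particular your treatments of (5) and (7) coincide with the paper's two explicit remarks.

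Two small technical points. First, in item (1) the reproducing formula yields an \emph{ordinary} convolution inequality $|V_g f|\le \|g\|_2^{-2}\,|V_g f|\ast |V_g g|$ (Gr\"ochenig, Lemma~11.3.3), not a twisted one; your Young-type argument then works verbatim, but the word ``twisted'' is a slip. Second, your derivation of (3) from (2) via the Fourier swap is not quite as immediate as you indicate: the identity $|V_{\widehat g}\widehat f(x,\omega)|=|V_g f(-\omega,x)|$ exchanges the two variables of the STFT, so $\|\widehat f\|_{M^{p,q}}$ equals a mixed norm of $V_gf$ with the order of integration reversed (a Wiener amalgam norm), not $\|f\|_{M^{q,p}}$. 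To land in the modulation spaces $M^{\min\{p',2\},p}$ and $M^{\max\{p',2\},p}$ you need an additional Minkowski-type comparison of the two mixed-norm orders (which is exactly why the paper defers to \cite{cks}). This is a fixable gap, but your one-line reduction understates it.
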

\begin{proof}
All these statements are well-known and the interested reader may find a proof in \cite{gro, HG4, baob}.  For the proof of statements  \eqref{ir}, \eqref{el} and \eqref{rcs}, see  \cite[Theorem 12.2.2]{gro}, \cite{nonom},  and \cite[Corollary 1.1]{cks} respectively.  The proof  for the statement (5)  can be derived from  the fundamental identity of time-frequency analysis: 
$$ V_gf(x, w) = e^{-2 \pi i x \cdot w } \, V_{\widehat{g}} \widehat{f}(w, -x),$$
which is easy to obtain.The proof of the statement (7) is trivial, indeed, we have $\|f\|_{M^{p,q}}=\|\bar{f}\|_{M^{p,q}}.$ 
\end{proof}
\begin{Theorem} \label{pl} Let $p,q, p_{i}, q_{i}\in [1, \infty]$  $(i=0,1,2).$
If   $\frac{1}{p_1}+ \frac{1}{p_2}= \frac{1}{p_0}$ and $\frac{1}{q_1}+\frac{1}{q_2}=1+\frac{1}{q_0}, $ then
\begin{eqnarray}\label{prm}
M^{p_1, q_1}(\mathbb R^{d}) \cdot M^{p_{2}, q_{2}}(\mathbb R^{d}) \hookrightarrow M^{p_0, q_0}(\mathbb R^{d});
\end{eqnarray}
with norm inequality $\|f g\|_{M^{p_0, q_0}}\lesssim \|f\|_{M^{p_1, q_1}} \|g\|_{M^{p_2,q_2}}.$
In particular, the  space $M^{p,q}(\mathbb R^{d})$ is a poinwise $\mathcal{F}L^{1}(\mathbb R^{d})$-module, that is, it satisfies
\begin{eqnarray}\label{mp}
\|fg\|_{M^{p,q}} \lesssim \|f\|_{\mathcal{F}L^{1}} \|g\|_{M^{p,q}}.
\end{eqnarray} 
\end{Theorem}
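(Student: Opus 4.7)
The plan is to establish the product estimate through a short-time Fourier transform (STFT) computation, combined with Young's convolution inequality and Hölder's inequality; the $\mathcal{F}L^1$-module statement will then follow by specialization and an embedding from Lemma~\ref{rl}\eqref{rcs}.

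First, I would pick Schwartz windows $\varphi_1,\varphi_2\in\mathcal{S}(\R^d)$ so that $\varphi:=\varphi_1\varphi_2$ is again a nonzero Schwartz function (e.g., Gaussians), and invoke Remark~\ref{equidm} so that each $\|\cdot\|_{M^{p_i,q_i}}$ may be computed with its own window, up to equivalent constants. The crux of the argument is the convolution identity
\begin{equation*}
V_\varphi(fh)(x,\xi) \;=\; \bigl(V_{\varphi_1}f(x,\cdot)\,*_\xi\,V_{\varphi_2}h(x,\cdot)\bigr)(\xi),
\end{equation*}
which I would derive by recognizing $V_\varphi(fh)(x,\cdot)$ as the Fourier transform in $t$ of the pointwise product $[f(t)\overline{\varphi_1(t-x)}]\cdot[h(t)\overline{\varphi_2(t-x)}]$ and then applying the convolution theorem for the Fourier transform in the frequency variable.

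With this identity in hand, the remaining estimate is routine. For each fixed $x$, Young's convolution inequality in $\xi$, using the hypothesis $\tfrac{1}{q_1}+\tfrac{1}{q_2}=1+\tfrac{1}{q_0}$, yields
\begin{equation*}
\|V_\varphi(fh)(x,\cdot)\|_{L^{q_0}_\xi} \;\leq\; \|V_{\varphi_1}f(x,\cdot)\|_{L^{q_1}_\xi}\,\|V_{\varphi_2}h(x,\cdot)\|_{L^{q_2}_\xi}.
\end{equation*}
Taking the $L^{p_0}$-norm in $x$ and applying Hölder's inequality with $\tfrac{1}{p_1}+\tfrac{1}{p_2}=\tfrac{1}{p_0}$ then closes the estimate and gives $\|fh\|_{M^{p_0,q_0}}\lesssim\|f\|_{M^{p_1,q_1}}\|h\|_{M^{p_2,q_2}}$, which is \eqref{prm}.

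For the $\mathcal{F}L^1$-module inclusion \eqref{mp}, I would specialize \eqref{prm} to $(p_1,q_1,p_2,q_2)=(\infty,1,p,q)$, noting that the exponent conditions force $(p_0,q_0)=(p,q)$, so that $\|fg\|_{M^{p,q}}\lesssim\|f\|_{M^{\infty,1}}\|g\|_{M^{p,q}}$. The argument is completed by applying the embedding $\mathcal{F}L^1(\R^d)\hookrightarrow M^{\infty,1}(\R^d)$, which is Lemma~\ref{rl}\eqref{rcs} at $p=1$ (since $\max\{p',2\}=\infty$). I do not anticipate a serious obstacle here; the only real subtlety is that the convolution identity forces one to use two distinct windows on the factor side and their product on the product side, and it is precisely window-independence of the modulation-space norm (already recorded in Remark~\ref{equidm}) that legitimizes this maneuver.
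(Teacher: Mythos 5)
Your strategy is sound and, in fact, more self-contained than the paper's: the paper simply cites \cite{ambenyi} for the product relation \eqref{prm} and then deduces \eqref{mp} from the embedding $\mathcal{F}L^{1}(\R^d)\hookrightarrow M^{\infty,1}(\R^d)$, exactly as you do in your final paragraph. Your convolution identity $V_{\varphi_1\varphi_2}(fh)(x,\cdot)=V_{\varphi_1}f(x,\cdot)\ast V_{\varphi_2}h(x,\cdot)$ is correct and is the standard starting point for a direct proof of \eqref{prm}.

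There is, however, a genuine gap in the order in which you take the two norms. By Definition \ref{ms} the modulation space norm is $\bigl\|\,\|V_{\varphi}F(\cdot,\xi)\|_{L^{p}_x}\bigr\|_{L^{q}_\xi}$: the $x$-integration is the \emph{inner} one. Your argument first applies Young in $\xi$ at fixed $x$ and then takes the $L^{p_0}$-norm in $x$, so what you actually bound is $\bigl\|\,\|V_{\varphi}(fh)(x,\cdot)\|_{L^{q_0}_\xi}\bigr\|_{L^{p_0}_x}$, the mixed norm with the integrations reversed. By Minkowski's integral inequality this quantity dominates $\|fh\|_{M^{p_0,q_0}}$ only when $p_0\leq q_0$; when $q_0<p_0$ the comparison goes the other way and your chain of inequalities does not control the modulation norm. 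This is precisely the regime the paper needs: in \eqref{mp} one has $(p_0,q_0)=(p,q)$ with $1\leq q<\frac{2d}{d+\gamma}<2$ and $p$ as large as $2$, e.g.\ $(p_0,q_0)=(2,1)$. The fix is the standard reordering: start from the pointwise bound $|V_{\varphi}(fh)(x,\xi)|\leq \int |V_{\varphi_1}f(x,\eta)|\,|V_{\varphi_2}h(x,\xi-\eta)|\,d\eta$, take the $L^{p_0}_x$-norm \emph{first}, using Minkowski's integral inequality to pass it inside the $\eta$-integral and H\"older (with $\frac{1}{p_1}+\frac{1}{p_2}=\frac{1}{p_0}$) on the integrand; this gives $\|V_{\varphi}(fh)(\cdot,\xi)\|_{L^{p_0}_x}\leq (F\ast H)(\xi)$ with $F(\eta)=\|V_{\varphi_1}f(\cdot,\eta)\|_{L^{p_1}_x}$ and $H(\eta)=\|V_{\varphi_2}h(\cdot,\eta)\|_{L^{p_2}_x}$, and only then does Young in $\xi$ close the estimate. (A minor additional point: the convolution-theorem step should be justified for $f,h\in\mathcal{S}(\R^d)$ and extended by density or weak-$\ast$ limits.) Your deduction of \eqref{mp} from \eqref{prm} with $(p_1,q_1)=(\infty,1)$ and Lemma \ref{rl}\eqref{rcs} is correct and coincides with the paper's.
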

\begin{proof}
The product relation \eqref{prm} between modulation spaces is well known and we refer the interested reader to \cite{ambenyi} and  since $\mathcal{F}L^{1}(\mathbb R^{d}) \hookrightarrow M^{\infty, 1}(\mathbb R^{d})$, the desired inequality \eqref{mp} follows.
\end{proof}
For $f\in \mathcal{S}(\mathbb R^{d}),$ we define the fractional Schr\"odinger propagator $e^{it(-\Delta)^{\alpha/2}}$ for $t, \alpha \in \mathbb R$ as follows:
\begin{eqnarray*}
\label{sg}
U(t)f(x)=e^{it (-\Delta)^{\alpha/2}}f(x)= \int_{\mathbb R^d}  e^{i \pi t|\xi|^{\alpha}}\, \hat{f}(\xi) \, e^{2\pi i \xi \cdot x} \, d\xi.
\end{eqnarray*}

The next proposition shows that the uniform boundedness of the Schr\"odinger propagator $e^{it(-\Delta)^{\alpha/2}}$ on modulation spaces. In fact, using  \cite[Theorems 1, 2]{chen} and Lemma \ref{rl}\eqref{ir}, we have 
\begin{Proposition}[\cite{chen}] \label{uf} Let $\frac{1}{2} < \alpha \leq 2, 1 \leq p,q \leq \infty.$ Then $$ \|U(t)f \|_{M^{p,q}}\leq  (1+t)^{d\left| \frac{1}{p}-\frac{1}{2} \right|} \|f\|_{M^{p,q}}.$$
\end{Proposition}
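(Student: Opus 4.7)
The plan is to reduce the bound to a uniform-in-$k$ estimate on unit frequency cubes and then obtain that estimate by interpolating between an $L^1$-kernel bound and Plancherel.

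First I would switch to the equivalent discrete description of $M^{p,q}$ through a smooth frequency-uniform partition of unity $\{\sigma(\xi-k)\}_{k\in\mathbb{Z}^d}$: with $\square_k=\mathcal{F}^{-1}\sigma(\cdot-k)\mathcal{F}$ one has the standard equivalence
$$\|f\|_{M^{p,q}}\asymp\Bigl(\sum_{k\in\mathbb{Z}^d}\|\square_k f\|_{L^p}^q\Bigr)^{1/q}.$$
Because $U(t)$ is a Fourier multiplier, it commutes with every $\square_k$, so applying the equivalence to $U(t)f$ reduces the matter to proving the uniform operator bound
$$\|U(t)\square_k f\|_{L^p}\leq C(1+|t|)^{d\left|\frac{1}{p}-\frac{1}{2}\right|}\|\square_k f\|_{L^p},\qquad k\in\mathbb{Z}^d,$$
with $C$ independent of $k$; the desired $M^{p,q}$-inequality then follows by taking $\ell^q_k$-norms on both sides.

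Next I would realize $U(t)\square_k$ as convolution with the kernel
$$K_{t,k}(x)=\mathcal{F}^{-1}\bigl(e^{i\pi t|\xi|^\alpha}\sigma(\xi-k)\bigr)(x),$$
and prove the two endpoint bounds separately. Plancherel gives the $L^2\to L^2$ bound with constant $1$ immediately, since $e^{i\pi t|\xi|^\alpha}$ is unimodular. For the $L^1\to L^1$ endpoint I would target the uniform estimate $\|K_{t,k}\|_{L^1}\lesssim (1+|t|)^{d/2}$: after the shift $\xi=k+\eta$ this reduces to controlling the $L^1$-norm in $x$ of an oscillatory integral with phase $t(|k+\eta|^\alpha-|k|^\alpha)$ and bump $\sigma(\eta)$. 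The restriction $\tfrac{1}{2}<\alpha\leq 2$ is precisely what makes this feasible: it is the range in which the Hessian of $|\xi|^\alpha$ can be controlled on the shifted unit cube uniformly in $k$, so that stationary phase / van der Corput arguments deliver the pointwise $|t|^{-d/2}$ oscillatory decay which, once integrated against $\sigma$, becomes a $k$-uniform $L^1$-bound on $K_{t,k}$. This is the technical heart of the statement.

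Once the kernel bound $\|K_{t,k}\|_{L^1}\lesssim(1+|t|)^{d/2}$ is secured, Young's inequality produces the $L^1\to L^1$ endpoint with the same growth, and Riesz--Thorin interpolation with the Plancherel endpoint yields a bound of order $(1+|t|)^{d(1/p-1/2)}$ for $1\leq p\leq 2$; duality extends it to $2\leq p\leq\infty$, giving the claimed exponent $d\left|\frac{1}{p}-\frac{1}{2}\right|$. Summing in $\ell^q_k$ with the $k$-independent constant closes the argument. The main obstacle is the uniform kernel estimate in the genuinely fractional regime $\alpha<2$: unlike the Schr\"odinger case one cannot complete the square, and one must exploit non-degeneracy of the Hessian of $|\xi|^\alpha$ away from the origin together with separate care for the small-$|k|$ region where $|\xi|^\alpha$ is not smooth. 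This is exactly the content of the asymptotic unimodular multiplier estimates of \cite{chen}; given that estimate, the remainder is the standard interpolation-plus-$\ell^q$-summation package, and Lemma \ref{rl}\eqref{ir} finally promotes the bound in $p$ and $q$ to the full range stated in the proposition.
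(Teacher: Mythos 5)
Your overall architecture is the right one, and in fact it is the only "proof" the paper has: the paper does not prove this proposition at all, but quotes it from \cite[Theorems 1, 2]{chen} (together with Lemma \ref{rl}\eqref{ir}), and the argument in \cite{chen} is exactly the package you describe --- frequency-uniform decomposition $\|f\|_{M^{p,q}}\asymp\|(\|\square_k f\|_{L^p})_k\|_{\ell^q}$, commutation of the multiplier with $\square_k$, a $k$-uniform $L^1$ kernel bound interpolated against Plancherel, duality for $p>2$, and $\ell^q$ summation. Since you ultimately outsource the kernel estimate to \cite{chen}, your proposal lands where the paper does.

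There is, however, one step that would fail if executed as you describe it, and it is precisely the technical heart you identify. For $1<\alpha<2$ and $|k|$ large, the Hessian of $|\xi|^\alpha$ on the unit cube at $k$ has eigenvalues of size $|k|^{\alpha-2}\to 0$, so its determinant degenerates and stationary phase does \emph{not} yield pointwise decay $|t|^{-d/2}$ uniformly in $k$ (indeed $\|K_{t,k}\|_{L^\infty}\sim(1+|t|\,|k|^{\alpha-2})^{-d/2}\to 1$ as $|k|\to\infty$ for fixed $t$); at the same time the first derivatives of the phase are of size $|t|\,|k|^{\alpha-1}$, which is not uniformly bounded, so a naive derivative count on $e^{i\pi t|\xi|^{\alpha}}\sigma(\xi-k)$ also fails. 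The correct mechanism is to Taylor-expand the phase at $k$ and strip off the affine part $t\bigl(|k|^{\alpha}+\nabla(|\cdot|^{\alpha})(k)\cdot(\xi-k)\bigr)$, which only modulates and translates $K_{t,k}$ and hence does not change its $L^1$ norm; the quadratic remainder has all derivatives of order $\geq 2$ bounded by $|t|\,|k|^{\alpha-2}\lesssim|t|$ uniformly for $|k|\geq 1$, and a Bernstein-type lemma (bounding $\|\mathcal{F}^{-1}\mu\|_{L^1}$ by finitely many $L^2$ norms of derivatives of $\mu$) then gives $\|K_{t,k}\|_{L^1}\lesssim(1+|t|)^{d/2}$. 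Your remaining points --- the separate treatment of small $|k|$ where $|\xi|^{\alpha}$ is not smooth (which is where the lower restriction on $\alpha$ enters), the Riesz--Thorin interpolation giving the exponent $d\left|\frac{1}{p}-\frac{1}{2}\right|$, and the $\ell^q$ summation --- are fine.
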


Finally, we note that there is also an equivalent  definition of modulation spaces using frequency-uniform decomposition techniques (which is quite similar in the spirit of Besov spaces), independently  studied by Wang et al. in \cite{bao}, which has turned out to be very fruitful in PDEs.  For a brief survey of modulation spaces and nonlinear evolution equations,  we refer the interested reader to \cite{rsw} and  for further  reading from the PDEs viewpoint we refer to \cite{baob} and the references therein.

\section{Global wellposedness in $L^2(\mathbb R^d)$}\label{smt}
In this section we prove global well-posedness for fractional Hartree equation with Cauchy date in  $L^2(\mathbb R^d).$  We shall see this  result will turn out to be one of the key steps in establishing the global well-posedness results in modulation spaces and Fourier algebra.

\begin{Definition} A pair $(q,r)$ is $\alpha-$fractional admissible if  $q\geq 2, r\geq 2$ and
$$\frac{\alpha}{q} =  d \left( \frac{1}{2} - \frac{1}{r} \right).$$ 
\end{Definition}

\begin{Proposition}$($\cite[Corollary 3.10]{guo}$)$ \label{fst} Let  $d\ge 2$ and  $\frac{2d}{2d-1} < \alpha \leq 2.$  Assume that $u_0$ and $F$ are radial. Then 
\begin{enumerate}
\item For any $\alpha-$fractional admissible pair  $(p,q),$ there exists $C_q$ such that 
$$\|U(t) \phi  \|_{L^{p,q}_{t,x}} \leq C_q \|\phi \|_{L^2}, \   \forall \phi \in L^2 (\mathbb R^d).$$
\item Define 
$$DF(t,x) = \int_0^t U(t-\tau )F(\tau,x) d\tau.$$
For all $\alpha-$fractional  admissible pair  $(p_i,q_i), (i=1, 2)$, there exists  $C$ (constant) such that for all intervals $I \ni 0, $
$$ \|D(F)\|_{L^{p_1, q_1}_{t,x}}  \leq C  \|F\|_{L^{p_2', q_2'}_{t,x}}, \ \forall F \in L^{p_2'} (I, L^{q_2'})$$  where $p_i'$ and $ q_i'$ are H\"older conjugates of $p_i$ and $q_i'$
respectively.
\end{enumerate}
\end{Proposition}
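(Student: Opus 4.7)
The plan is to follow the classical Strichartz framework of Keel--Tao, adapted to the fractional Schr\"odinger setting with the radial restriction. The heart of the argument is a frequency-localized dispersive estimate for $U(t)$ acting on radial data, which one then bootstraps to the homogeneous estimate (1) by the $TT^{*}$ method, and to the inhomogeneous estimate (2) by the Christ--Kiselev lemma.

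First I would perform a Littlewood--Paley decomposition $\phi = \sum_{N \in 2^{\mathbb{Z}}} P_{N}\phi$ and reduce to bounding $U(t)P_{N}$ on radial functions. Since $P_{N}\phi$ is radial, its Fourier transform is radial and the kernel of $U(t)P_{N}$ can be written through the Bessel function $J_{(d-2)/2}$. A careful stationary / non-stationary phase analysis of the one-dimensional oscillatory integral
\[
\int_{0}^{\infty} e^{i\pi t \rho^{\alpha}}\, \widetilde{\phi}(\rho)\, J_{(d-2)/2}(2\pi \rho |x|)\, \rho^{d-1}\, d\rho,
\]
exploiting the improved decay of $J_{(d-2)/2}$ away from its turning points, yields a radial dispersive estimate of the form $\|U(t) P_{N}\phi\|_{L^{\infty}} \lesssim N^{\sigma(\alpha,d)} |t|^{-\delta(\alpha,d)} \|\phi\|_{L^{1}}$. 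The precise exponents are chosen so that, together with the trivial $L^{2}$ bound, summation in $N$ converges exactly in the range $\alpha > \tfrac{2d}{2d-1}$.

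Given this dispersive bound, part (1) follows from the $TT^{*}$ identity $\|U(t)\phi\|_{L^{p}_{t}L^{q}_{x}}^{2} = \langle \phi, T^{*}T\phi\rangle$, where $T^{*}T\,F(t) = \int U(t-s)F(s)\,ds$ is controlled via Hardy--Littlewood--Sobolev after one uses the admissibility relation $\alpha/p = d(1/2 - 1/q)$ to balance the exponents. Part (2) is then obtained by combining (1) with its dual, restricted to retarded times, and invoking the Christ--Kiselev lemma (applicable here because the admissible pairs of interest satisfy $p_{i} > 2$, the endpoint $p=2$ being avoided in the radial range considered).

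The main obstacle is the radial dispersive estimate itself. Unlike the classical Schr\"odinger case $\alpha = 2$, whose kernel is explicitly Gaussian, the phase $|\xi|^{\alpha}$ with $\alpha < 2$ degenerates at the origin and its Hessian at the critical point is anisotropic, so the standard stationary phase gives strictly less decay than what is needed for non-radial Strichartz. The radial assumption collapses the problem to a one-dimensional oscillatory integral where the Bessel asymptotics can compensate, but only when $\alpha > \tfrac{2d}{2d-1}$ is the resulting decay strong enough to yield the admissible range in the statement. Since the proposition is quoted from \cite[Corollary 3.10]{guo}, the full execution of the Bessel / stationary-phase bookkeeping can be deferred to that reference.
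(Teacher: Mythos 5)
First, a structural point: the paper offers no proof of this proposition at all --- it is imported verbatim as \cite[Corollary 3.10]{guo}, so the only ``proof'' in the text is the citation. Your sketch is therefore really being measured against the argument in that reference.

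Your overall architecture (Littlewood--Paley reduction, Bessel representation of radial data, $TT^{*}$, Christ--Kiselev for the retarded estimate) is the right family of ideas, but the central mechanism you propose does not work as stated. You locate the heart of the matter in an improved fixed-time dispersive estimate $\|U(t)P_{N}\phi\|_{L^{\infty}}\lesssim N^{\sigma}|t|^{-\delta}\|\phi\|_{L^{1}}$ for radial $\phi$, to be fed into the Keel--Tao machinery. There are two problems. First, radiality cannot improve the fixed-time $L^{1}\to L^{\infty}$ decay: evaluating $U(t)\phi$ at $x=0$ for a radial bump shows that the generic bound $|t|^{-d/2}$ (for unit-frequency-localized data) is already saturated by radial inputs, so no better temporal decay rate $\delta$ is available uniformly in $x$. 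Second, and more decisively, even the correct decay $|t|^{-d/2}$ run through $TT^{*}$/Keel--Tao produces only the Schr\"odinger-admissible line $\frac{2}{q}=d\left(\frac{1}{2}-\frac{1}{r}\right)$, whereas the pairs in the statement satisfy $\frac{\alpha}{q}=d\left(\frac{1}{2}-\frac{1}{r}\right)$ with $\alpha<2$, which lies strictly below that line; no fixed-time dispersive estimate of any order can reach these pairs through the standard abstract theorem. The radial gain in \cite{guo} is spatial rather than temporal: one exploits the $(r\rho)^{-(d-1)/2}$ decay of the Bessel factor away from the origin to prove frequency-localized $L^{2}\to L^{q}_{t}L^{r}_{x}$ bounds directly, via a weighted, essentially one-dimensional oscillatory-integral/$TT^{*}$ argument in the radial variable, and it is there that the threshold $\alpha>\frac{2d}{2d-1}$ enters. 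Your treatment of part (2) by duality and Christ--Kiselev is fine once part (1) is in hand, and deferring the detailed bookkeeping to the reference is legitimate, but the dispersive-estimate route you describe for part (1) would fail.
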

\begin{rem}\label{jr} If $\alpha =2,$ we do not need radial assumption on initial data $u_0$ and on nonlinearity $F$  in the Strichartz estimate. Specifically,  if $\alpha =2,$ Proposition \ref{fst} is true for any $d\geq 1, u_0$ and $F$. See \cite{kt}.
\end{rem}
We also need to work with the  convolution with the Hartree potential  $|x|^{-\gamma},$  so for the convenience of reader we recall:
\begin{Proposition}[Hardy-Littlewood-Sobolev inequality] \label{hls}Assume that  $0<\gamma< d$ and $1<p<q< \infty$ with
$$\frac{1}{p}+\frac{\gamma}{d}-1= \frac{1}{q}.$$
Then the map $f \mapsto |x|^{-\gamma}\ast f$ is bounded from $L^p(\mathbb R^d)$ to $L^q(\mathbb R^d):$
$$\|x|^{-\gamma}\ast f\|_{L^q} \leq C_{d,\gamma, p} \|f\|_{L^p}.$$
\end{Proposition}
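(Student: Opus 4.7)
The plan is to establish the Hardy--Littlewood--Sobolev inequality by first showing that the Riesz kernel $I_\gamma(x) := |x|^{-\gamma}$ lies in the weak Lebesgue space $L^{d/\gamma,\infty}(\mathbb R^d)$, and then combining a weak-type bound for the convolution operator $T f := I_\gamma \ast f$ with Marcinkiewicz interpolation. The first step is a direct computation of the distribution function: for $t > 0$,
\[
|\{x \in \mathbb R^d : |x|^{-\gamma} > t\}| = |\{x : |x| < t^{-1/\gamma}\}| = c_d\, t^{-d/\gamma},
\]
so $\|I_\gamma\|_{L^{d/\gamma,\infty}} < \infty$. Note that $I_\gamma \notin L^{d/\gamma}(\mathbb R^d)$, which is precisely why the ordinary Young's inequality cannot be applied directly to conclude.

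For the weak-type bound on $T$, I would fix $\alpha > 0$ and $f$ with $\|f\|_{L^p} = 1$, and split the kernel as $I_\gamma = I_\gamma \chi_{\{|x| \leq R\}} + I_\gamma \chi_{\{|x| > R\}}$ for a parameter $R > 0$ to be chosen. Young's inequality applied to the first piece gives $\|I_\gamma \chi_{\{|x|\leq R\}} \ast f\|_{L^p} \lesssim R^{d-\gamma}$, while H\"older applied to the second gives $\|I_\gamma \chi_{\{|x| > R\}} \ast f\|_{L^\infty} \lesssim R^{d/p' - \gamma}$ (the relevant integrability holds because $p' > d/\gamma$ under the scaling constraint $1/p + \gamma/d = 1 + 1/q$). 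Choosing $R = R(\alpha)$ so that the $L^\infty$ piece is at most $\alpha/2$, and then applying Chebyshev's inequality to the first piece, one obtains $|\{|Tf| > \alpha\}| \lesssim \alpha^{-q}$ after a short exponent bookkeeping. Hence $T$ is of weak-type $(p,q)$, and Marcinkiewicz interpolation between two such weak-type estimates at nearby exponents upgrades this to the desired strong-type bound $\|Tf\|_{L^q} \lesssim \|f\|_{L^p}$.

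The main obstacle is the endpoint behavior: the inequality genuinely fails at $p = 1$ (where only a weak-type $(1, d/(d-\gamma))$ estimate survives) and at $q = \infty$, so the interpolation argument must stay strictly inside $(1,\infty)$, which is exactly the hypothesis $1 < p < q < \infty$ in the statement. A more streamlined alternative would be to invoke O'Neil's generalized Young inequality on Lorentz spaces, which yields $\|I_\gamma \ast f\|_{L^q} \lesssim \|I_\gamma\|_{L^{d/\gamma,\infty}} \|f\|_{L^p}$ in a single step, but presupposes the Lorentz space machinery. Since the inequality is classical and used here only as a black box in the applications that follow, the most efficient route in a final write-up is simply to cite a standard reference such as Stein's \emph{Singular Integrals} or Grafakos's \emph{Classical Fourier Analysis}.
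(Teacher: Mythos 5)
Your outline is the standard textbook proof of Hardy--Littlewood--Sobolev and it is correct as far as it goes: under the scaling relation one has $d/p'-\gamma=-d/q$, so choosing $R$ to kill the tail piece and applying Chebyshev to the $L^1$-kernel piece does close up to exactly $|\{|Tf|>\alpha\}|\lesssim \alpha^{-q}$, and off-diagonal Marcinkiewicz interpolation (admissible here since $p<q$ for every exponent pair in play) gives the strong-type bound strictly inside the endpoints $p=1$ and $q=\infty$. The point of comparison is that the paper does not prove Proposition \ref{hls} at all: it is recalled purely as a classical black box, to be applied later in the $L^2$ fixed-point argument of Proposition \ref{miD}, which is precisely the ``cite a standard reference'' route you identify in your closing paragraph as the efficient choice. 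So your write-up is a correct, self-contained substitute for that citation rather than a divergence from anything in the paper; the only thing it buys beyond the paper's treatment is making the role of the hypotheses $1<p<q<\infty$ and $0<\gamma<d$ visible (integrability of the truncated kernel near the origin, $p'>d/\gamma$ for the tail, and the failure of the endpoints).
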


Now in next proposition  we shall prove global wellposedness in $L^2(\mathbb R^d)$ for the fractional Hartree equation \eqref{fHTE}.
\begin{Proposition}\label{miD}
Let $d\geq 2,$  $\frac{2d}{2d-1} < \alpha \leq 2,$  and  $K$ be given by \eqref{hk} with $\lambda \in \mathbb R$ and $0<\gamma < \text{min} \{\alpha, d\}.$ If $u_{0}\in L_{rad}^{2}(\mathbb R^{d}),$ then \eqref{fHTE} has a unique global solution 
$$u\in C(\mathbb R, L_{rad}^{2}(\mathbb R^d))\cap L^{4\alpha/\gamma}_{loc}(\mathbb R, L^{4d/(2d-\gamma)} (\mathbb R^d)).$$ 
In addition, its $L^{2}-$norm is conserved, 
$$\|u(t)\|_{L^{2}}=\|u_{0}\|_{L^{2}}, \   \forall t \in \mathbb R,$$
and for all $\alpha-$ fractional admissible pairs  $(p,q), u \in L_{loc}^{p}(\mathbb R, L^{q}(\mathbb R^d)).$
\end{Proposition}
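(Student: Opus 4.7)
The plan is to prove local existence in $L^2_{rad}(\mathbb R^d)$ via a standard contraction-mapping argument on the Duhamel formulation
\[
\Phi(u)(t) = U(t)u_0 - i \int_0^t U(t-\tau)\bigl[(K \ast |u|^2) u\bigr](\tau)\, d\tau,
\]
using the radial Strichartz estimates from Proposition~\ref{fst}, and then upgrade to global existence using conservation of the $L^2$ mass. The natural Strichartz pair to work with is $(q,r) = (4\alpha/\gamma,\, 4d/(2d-\gamma))$, which is easily checked to be $\alpha$-fractional admissible: $\alpha/q = \gamma/4 = d(1/2 - 1/r)$. Both admissible pairs in the Strichartz estimate will be taken equal to this one, so the functional setting is $X_T := C([-T,T], L^2_{rad}) \cap L^q([-T,T], L^r_x)$.

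The key nonlinear estimate is: for $u,v$ with the same profile,
\[
\bigl\|(K\ast|u|^2)u\bigr\|_{L^{q'}_T L^{r'}_x} \lesssim T^{1-\gamma/\alpha}\,\|u\|_{L^q_T L^r_x}^{\,3}.
\]
To see this, first fix $t$ and apply H\"older in $x$ with $1/r' = 1/s + 1/r$ where $s = 2d/\gamma$, giving
\[
\bigl\|(|x|^{-\gamma}\ast|u|^2)u\bigr\|_{L^{r'}_x} \le \bigl\||x|^{-\gamma}\ast|u|^2\bigr\|_{L^{s}_x}\|u\|_{L^r_x}.
\]
The Hardy--Littlewood--Sobolev inequality (Proposition~\ref{hls}) with exponents $a = 2d/(2d-\gamma)$ and $s = 2d/\gamma$ (valid since $0<\gamma<d$) yields $\||x|^{-\gamma}\ast|u|^2\|_{L^s} \lesssim \|u\|_{L^{2a}}^2 = \|u\|_{L^r}^2$ because $2a = r$. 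Combining, $\|(K\ast|u|^2)u\|_{L^{r'}_x} \lesssim \|u\|_{L^r_x}^3$ pointwise in $t$. Now integrating in time and applying H\"older, the exponent condition $3q' \le q$ reduces to $\gamma < \alpha$, which is precisely our hypothesis, giving the $T^{1-\gamma/\alpha}$ factor with positive exponent.

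With this nonlinear estimate in hand, Proposition~\ref{fst} applied to $\Phi(u)$ and $\Phi(u) - \Phi(v)$ produces bounds
\[
\|\Phi(u)\|_{X_T} \le C\|u_0\|_{L^2} + CT^{1-\gamma/\alpha}\|u\|_{X_T}^3,\qquad
\|\Phi(u)-\Phi(v)\|_{X_T} \le CT^{1-\gamma/\alpha}\bigl(\|u\|_{X_T}^2+\|v\|_{X_T}^2\bigr)\|u-v\|_{X_T}.
\]
Choosing $T = T(\|u_0\|_{L^2})$ small enough, $\Phi$ is a contraction on a suitable ball of $X_T$, and since $U(t)$ and the nonlinearity $(K\ast|u|^2)u$ both preserve radial symmetry, the iterates (and hence the fixed point) lie in the radial subspace. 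This produces a unique local solution $u \in X_T$, and a standard bootstrap using Proposition~\ref{fst} for any other $\alpha$-admissible pair $(p,q)$ gives $u \in L^p_{loc}(\mathbb R, L^q)$.

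The remaining step is to turn the local solution into a global one and to verify $L^2$ conservation. Multiplying \eqref{fHTE} by $\bar u$, integrating in $x$ and taking imaginary parts, the fractional Laplacian term drops out by self-adjointness and the Hartree term $\int (K\ast|u|^2)|u|^2\,dx$ is real (since $K(x)=\lambda/|x|^\gamma$ is real and even), yielding $\tfrac{d}{dt}\|u\|_{L^2}^2 = 0$ on the interval of regularity; this conservation law is justified first for smooth approximating data and then transferred by the continuity of the flow map in $L^2$. Since the local existence time $T$ depends only on $\|u_0\|_{L^2}$, iterating the local theory with the conserved mass gives a solution on all of $\mathbb R$. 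The main technical point to check carefully is that the radial structure is preserved at every step so that Proposition~\ref{fst} applies, and that when $\alpha=2$ one simply invokes Remark~\ref{jr} to drop the radial hypothesis entirely.
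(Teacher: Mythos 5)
Your proposal is correct and follows essentially the same route as the paper: a contraction argument on the Duhamel formulation in the Strichartz space built from the admissible pair $(4\alpha/\gamma,\,4d/(2d-\gamma))$, with the nonlinearity controlled by H\"older plus Hardy--Littlewood--Sobolev to produce the factor $T^{1-\gamma/\alpha}$, and globalization via $L^2$ conservation. The only differences are cosmetic (you apply H\"older pointwise in $t$ before integrating, and you spell out the justification of mass conservation, which the paper leaves implicit).
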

\begin{proof} By Duhamel's formula, we write \eqref{fHTE}
as 
$$u(t)=U(t)u_0- i \int_0^t U(t-\tau) (K \ast |u|^2)u(\tau) d\tau:= \Phi(u)(t).$$
Put $ s= \frac{\alpha}{2}$.  We introduce the space
\begin{eqnarray*}
Y(T)  & =  &\{ \phi \in C\left([0,T], L_{rad}^2(\mathbb R^d) \right): \|\phi \|_{L^{\infty}([0, T], L^2)}  \leq 2 \|u_0\|_{L^2}, \\
&&  \|\phi\|_{L^{\frac{8s}{\gamma}} ([0,T], L^{\frac{4d}{2d-\gamma}}  ) } \lesssim \|u_0\|_{L^2}\}
\end{eqnarray*}
and the distance 
$$d(\phi_1, \phi_2)= \|\phi_1 - \phi_2 \|_{L^{\frac{8s}{\gamma} }\left( [0, T], L^{\frac{4d}{(2d- \gamma)}}\right)}.$$ Then $(Y, d)$ is a complete metric space. Now we show that $\Phi$ takes $Y(T)$ to $Y(T)$ for some $T>0.$
We put 
$$ \ q= \frac{8s}{\gamma}, \  r= \frac{4d}{2d- \gamma}.$$ 
Note that $(q,r)$ is $\alpha-$fractional admissible and 
$$ \frac{1}{q'}= \frac{4s- \gamma}{4s} + \frac{1}{q}, \  \frac{1}{r'}= \frac{\gamma}{2d} + \frac{1}{r}.$$
Let $(\bar{q}, \bar{r}) \in \{ (q,r), (\infty, 2) \}.$ By Proposition \ref{fst} and  H\"older inequality, we have 
\begin{eqnarray*}
\|\Phi(u)\|_{L_{t,x}^{\bar{q}, \bar{r}}} &  \lesssim &  \|u_0\|_{L^2} + \|(K \ast |u|^2)u \|_{L_{t,x}^{q',r'}}\\
& \lesssim &  \|u_0\|_{L^2} + \|K \ast |u|^2\|_{L_{t,x}^{\frac{4s}{4s-\gamma}, \frac{2d}{\gamma}}} 
\|u\|_{L^{q,r}_{t,x}}.
\end{eqnarray*}
Since $0<\gamma< \min \{\alpha, d \}$, by Proposition \ref{hls},  we  have 
\begin{eqnarray*}
\|K \ast |u|^2\|_{L_{t,x}^{\frac{4s}{4s-\gamma}, \frac{2d}{\gamma}}}  & = &  \left\|  \|K\ast |u|^2\|_{L_x^{\frac{2d}{\gamma}}} \right\|_{L_t^{\frac{4s}{4s- \gamma}}}\\
& \lesssim &   \left \| \||u|^2\|_{L_x^{\frac{2d}{2d- \gamma}}} \right\|_{L_t^{\frac{4s}{4s- \gamma}}} \\
& \lesssim & \|u\|^2_{L_{t,x}^{\frac{8s}{4s- \gamma},r}}\\
& \lesssim & T^{1- \frac{\gamma}{2s}} \|u\|^2_{L_{t,x}^{q,r}}.
\end{eqnarray*}
(In the last inequality we have used inclusion relation for the $L^p$ spaces on finite measure spaces: $\|\cdot\|_{L^p(X)} \leq \mu(X)^{\frac{1}{p}-\frac{1}{q}} \|\cdot \|_{L^{q}(X)}$ if measure of $X=[0,T]$ is finite, and $ 0<p<q<\infty$.) 
Thus, we  have 
$$
\|\Phi(u)\|_{L_{t,x}^{\bar{q}, \bar{r}}}   \lesssim   \|u_0\|_{L^2}+T^{1- \frac{\gamma}{2s}} \|u\|^3_{L_{t,x}^{q,r}}.$$
This shows that $\Phi$ maps $Y(T)$ to $Y(T).$  Next, we show $\Phi$
is a  contraction. For this, as 
 calculations performed  before, first we note  that 
\begin{eqnarray}\label{mi}
 \|(K\ast |v|^{2})(v-w)\|_{L_{t,x}^{q',r'}} \lesssim  T^{1-\frac{\gamma}{2s}} \|v\|^2_{L_{t,x}^{q,r}} \|v-w\|_{L_{t,x}^{q,r}}.
\end{eqnarray}
Put $\delta = \frac{8s}{4s-\gamma}.$  Notice that $\frac{1}{q'}= \frac{1}{2}+ \frac{1}{\delta}, \frac{1}{2}= \frac{1}{\delta} + \frac{1}{q},$ and thus by H\"older inequality, we obtain
\begin{eqnarray}\label{mi1}
 \|(K \ast (|v|^{2}- |w|^{2}))w\|_{L_{t,x}^{q',r'}} & \lesssim & \| K\ast \left( |v|^2-|w|^2\right)\|_{L_{t,x}^{2, \frac{2d}{\gamma}}} \|w\|_{L_{t,x}^{\delta, r}} \nonumber \\
 & \lesssim & ( \|K \ast (v (\bar{v}- \bar{w})) \|_{L_{t,x}^{2, \frac{2d}{\gamma}}} \nonumber
 \\
 && + \|K \ast \bar{w}(v-w)) \|_{L_{t,x}^{2, \frac{2d}{\gamma}}} ) \|w\|_{L_{t,x}^{\delta,r}} \nonumber\\
 & \lesssim & \left( \|v\|_{L_{t,x}^{\delta,r}} \|w\|_ {L_{t,x}^{\delta,r}} +\|w\|^2_ {L_{t,x}^{\delta,r}}\right) \|v-w\|_{L_{t,x}^{q,r}}\nonumber\\
 & \lesssim & T^{1-\frac{\gamma}{2s}}  \left( \|v\|_{L_{t,x}^{q,r}} \|w\|_ {L_{t,x}^{q,r}} +\|w\|^2_ {L_{t,x}^{q,r}}\right)  \|v-w\|_{L_{t,x}^{q,r}}
\end{eqnarray}
In view of the identity
$$(K\ast |v|^{2})v- (K\ast |w|^{2})w= (K\ast |v|^{2})(v-w) + (K \ast (|v|^{2}- |w|^{2}))w, $$  \eqref{mi}, and \eqref{mi1} gives
\begin{eqnarray*}
\|\Phi(v)- \Phi(w)\|_{L_{t,x}^{q,r}} & \lesssim &  \|(K\ast |v|^{2})(v-w)\|_{L_{t,x}^{q',r'}} + \|(K \ast (|v|^{2}- |w|^{2}))w\|_{L_{t,x}^{q',r'}}\\
& \lesssim &   T^{1-\frac{\gamma}{2s}}  \left( \|v\|^2_{L_{t,x}^{q,r}}  +\|v\|_{L_{t,x}^{q,r}} \|w\|_ {L_{t,x}^{q,r}} +\|w\|^2_ {L_{t,x}^{q,r}}\right)\\
&&  \|v-w\|_{L_{t,x}^{q,r}}.
\end{eqnarray*}
Thus $\Phi$ is a contraction form $Y(T)$ to $Y(T)$  provided that $T$ is sufficiently small. Then there exists a unique $u \in Y(T)$ solving \eqref{fHTE}. The global  existence of the solution \eqref{fHTE} follows from the conservation of the $L^2-$norm of $u.$ The last property of the proposition then follows from the Strichartz estimates applied with an arbitrary $\alpha-$fractional admissible pair on the left hand side and the same pairs as above on the right hand side.
\end{proof}
In view of Remark \ref{jr}, and  exploiting the ideas from the previous proposition, we obtain 
 
\begin{Proposition}[\cite{carle}]\label{carlep}
Let $d\geq 1,$  and  $K$ be given by \eqref{hk} with $\lambda \in \mathbb R$ and $0<\gamma < \text{min} \{2, d\}.$ If $u_{0}\in L^{2}(\mathbb R^{d}),$ then \eqref{fHTE} has a unique global solution 
$$u\in C(\mathbb R, L^{2}(\mathbb R^d))\cap L^{8/\gamma}_{loc}(\mathbb R, L^{4d/(2d-\gamma)}(\mathbb R^d)).$$ 
In addition, its $L^{2}-$norm is conserved, 
$$\|u(t)\|_{L^{2}}=\|u_{0}\|_{L^{2}}, \   \forall t \in \mathbb R,$$
and for all 2$-$fractional  admissible pairs  $(p,q), u \in L_{loc}^{p}(\mathbb R, L^{q}(\mathbb R^d)).$
\end{Proposition}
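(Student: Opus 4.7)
The plan is essentially to transcribe the proof of Proposition \ref{miD} verbatim with $\alpha = 2$, replacing the radial Strichartz inequality (Proposition \ref{fst}) by the classical non-radial Strichartz inequality for the free Schr\"odinger group, which is legitimate by Remark \ref{jr}. With $s = \alpha/2 = 1$, the Strichartz pair $(q,r) = (8/\gamma,\, 4d/(2d-\gamma))$ used before becomes a standard $2$-admissible pair (one checks $2/q = \gamma/4 = d(1/2 - 1/r)$, and $q, r \geq 2$ since $0 < \gamma < 2$), and $(\infty, 2)$ remains admissible.

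First I would set up the Duhamel map
\[
\Phi(u)(t) = U(t) u_0 - i \int_0^t U(t-\tau)\bigl(K \ast |u|^2\bigr) u(\tau)\, d\tau
\]
on the complete metric space
\[
Y(T) = \Bigl\{\phi \in C\bigl([0,T], L^2(\mathbb{R}^d)\bigr):\ \|\phi\|_{L^\infty_T L^2} \leq 2\|u_0\|_{L^2},\ \|\phi\|_{L^{8/\gamma}_T L^{4d/(2d-\gamma)}} \lesssim \|u_0\|_{L^2}\Bigr\},
\]
equipped with the metric $d(\phi_1,\phi_2) = \|\phi_1 - \phi_2\|_{L^{8/\gamma}_T L^{4d/(2d-\gamma)}}$. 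Next, applying the non-radial inhomogeneous Strichartz estimate to $\Phi$ with the conjugate exponents $1/q' = (4-\gamma)/4 + 1/q$, $1/r' = \gamma/(2d) + 1/r$, and estimating $\|K\ast |u|^2\|$ via Hardy--Littlewood--Sobolev (Proposition \ref{hls}; valid because $\gamma < d$) together with H\"older in time, would yield the self-mapping bound
\[
\|\Phi(u)\|_{L^{\bar q}_T L^{\bar r}} \lesssim \|u_0\|_{L^2} + T^{1-\gamma/2}\,\|u\|^3_{L^q_T L^r}
\]
for $(\bar q,\bar r) \in \{(q,r),(\infty,2)\}$. An analogous estimate for $\|\Phi(v)-\Phi(w)\|$, obtained from the identity $(K\ast|v|^2)v - (K\ast|w|^2)w = (K\ast|v|^2)(v-w) + (K\ast(|v|^2-|w|^2))w$, gives a contraction for $T$ sufficiently small; the positive power $1-\gamma/2 > 0$ used to absorb the factor of $T$ is exactly where $\gamma < \alpha = 2$ is consumed. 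Banach's fixed point theorem then produces the unique local solution $u \in Y(T)$.

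Finally, conservation of $\|u(t)\|_{L^2}$ (standard: multiply \eqref{fHTE} by $\bar u$, integrate, and take imaginary parts, using that the convolution kernel is real-valued and even) allows the local solution to be iterated on time intervals of uniform length, producing the global solution in $C(\mathbb{R}, L^2) \cap L^{8/\gamma}_{loc}(\mathbb{R}, L^{4d/(2d-\gamma)})$; the concluding statement about arbitrary $2$-admissible pairs $(p,q)$ on the left follows by reapplying Strichartz on each compact time interval with $(p,q)$ on the left-hand side and $(q,r)$ still on the right. The main obstacle is essentially bookkeeping: all nonlinear estimates carry over without any use of the radial hypothesis, since the only point where the radial assumption entered Proposition \ref{miD} was the Strichartz inequality itself, which is unnecessary here by Remark \ref{jr}.
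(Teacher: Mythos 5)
Your proposal is correct and is essentially identical to the paper's own treatment: the paper proves Proposition \ref{carlep} by invoking Remark \ref{jr} and repeating the fixed-point argument of Proposition \ref{miD} with $\alpha=2$ and the classical (non-radial) Keel--Tao Strichartz estimates, exactly as you do. Your exponent bookkeeping for the admissible pair $(8/\gamma, 4d/(2d-\gamma))$ and the role of $\gamma<2$ in producing the positive power $T^{1-\gamma/2}$ match the paper's computation.
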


\section{Proofs of main results}\label{pmr} 
\subsection{Global well-posedness  in $M^{p,q}(\mathbb R^d)$}
In this subsection, we shall prove Theorem \ref{MT}.
For  convenience of  the reader, we recall
\begin{Proposition}[\cite{baho}]\label{fc} 
Let $d\geq 1,$  $0<\gamma <d$ and $\lambda \in \mathbb R.$ There exists $C=C(d,\gamma)$ such that the Fourier transform of $K$ defined by \eqref{hk} is
\begin{eqnarray}
\widehat{K}(\xi)= \frac{\lambda C}{|\xi|^{d-\gamma}}.
\end{eqnarray}
\end{Proposition}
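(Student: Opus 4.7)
The plan is to compute the Fourier transform of $|x|^{-\gamma}$ as a tempered distribution; the constant $\lambda$ factors through since the Fourier transform is linear. Observe first that for $0<\gamma<d$ the function $|x|^{-\gamma}$ is locally integrable on $\mathbb R^d$ (because $\gamma<d$) and grows polynomially at infinity, hence defines a tempered distribution whose distributional Fourier transform is well-defined.

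My main tool will be the Gamma-function representation
\begin{equation*}
|x|^{-\gamma} \;=\; \frac{1}{\Gamma(\gamma/2)} \int_0^\infty t^{\gamma/2 - 1} e^{-t|x|^2}\, dt,
\end{equation*}
which follows from the substitution $s = t|x|^2$ in the definition of $\Gamma$. I would then pair both sides against a Schwartz test function $\varphi$, move the Fourier transform onto $\varphi$ via $\langle \widehat{|x|^{-\gamma}},\varphi\rangle = \langle |x|^{-\gamma},\widehat\varphi\rangle$, and exchange the order of integration by Fubini. The latter requires checking absolute convergence of the double integral $\int_0^\infty \!\! \int_{\mathbb R^d} t^{\gamma/2-1} e^{-t|x|^2} |\widehat\varphi(x)|\, dx\, dt$, which splits naturally at $t=1$ and is integrable precisely because $0 < \gamma < d$.

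Once Fubini is justified, I would apply the standard Gaussian Fourier identity (with the paper's convention $\widehat{f}(\xi) = \int f(x) e^{-2\pi i x\cdot \xi}\, dx$), namely
\begin{equation*}
\mathcal F[e^{-t|\cdot|^2}](\xi) \;=\; \left(\tfrac{\pi}{t}\right)^{d/2} e^{-\pi^2|\xi|^2 / t},
\end{equation*}
then perform the $t$-integral by reversing the Gamma-identity trick (now with exponent $(d-\gamma)/2$ in place of $\gamma/2$). The resulting expression yields
\begin{equation*}
\widehat{|x|^{-\gamma}}(\xi) \;=\; \frac{C(d,\gamma)}{|\xi|^{d-\gamma}}, \qquad C(d,\gamma) \;=\; \pi^{\gamma - d/2}\, \frac{\Gamma\!\left(\tfrac{d-\gamma}{2}\right)}{\Gamma\!\left(\tfrac{\gamma}{2}\right)},
\end{equation*}
and multiplying by $\lambda$ completes the proof.

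An alternative, more structural route would bypass the explicit constant: since $|x|^{-\gamma}$ is radial and homogeneous of degree $-\gamma$, the dilation rule $\widehat{f(a\cdot)}(\xi) = a^{-d}\widehat f(\xi/a)$ forces $\widehat{|x|^{-\gamma}}$ to be radial and homogeneous of degree $-(d-\gamma)$, hence of the form $C|\xi|^{-(d-\gamma)}$, and $C$ can be identified by testing against a single Gaussian. I prefer the Gamma-representation approach because it delivers both the form and the explicit constant in one computation. The only genuine subtlety is the Fubini justification; once the integrability split is handled carefully, the rest is a direct calculation.
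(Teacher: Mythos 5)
Your proposal is correct: the paper itself gives no proof of this proposition, only the citation to Bahouri--Chemin--Danchin, and your Gaussian-subordination argument (writing $|x|^{-\gamma}=\Gamma(\gamma/2)^{-1}\int_0^\infty t^{\gamma/2-1}e^{-t|x|^2}\,dt$, pairing against a test function, justifying Fubini by splitting at $t=1$ using $\gamma>0$ near $t=0$ and $\gamma<d$ near $t=\infty$, and reversing the Gamma identity) is exactly the standard proof found in that reference. The resulting constant $C(d,\gamma)=\pi^{\gamma-d/2}\,\Gamma\!\left(\tfrac{d-\gamma}{2}\right)/\Gamma\!\left(\tfrac{\gamma}{2}\right)$ is the correct one for the normalization $\widehat f(\xi)=\int f(x)e^{-2\pi i x\cdot\xi}\,dx$ used in the paper.
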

We start with decomposing Fourier transform of Hartree potential into Lebesgue spaces: indeed, in view of Proposition \ref{fc}, we  have
\begin{eqnarray}\label{dc}
\widehat{K}=k_1+k_2 \in L^{p}(\mathbb R^{d})+ L^{q}(\mathbb R^{d}),
\end{eqnarray}
where  $k_{1}:= \chi_{\{|\xi|\leq 1\}}\widehat{K} \in L^{p}(\mathbb R^{d})$ for all $p\in [1, \frac{d}{d-\gamma})$ and $k_{2}:= \chi_{\{|\xi|>1\}} \widehat{K} \in L^{q}(\mathbb R^{d})$ for all $q\in (\frac{d}{d-\gamma}, \infty].$
\begin{Lemma}\label{iml} Let  $K$ be given by \eqref{hk} with $\lambda \in \mathbb R,$ and $ 0<\gamma < d$, and  $1\leq p \leq 2, 1\leq q < \frac{2d}{d+\gamma}$. Then for any $f\in M^{p,q}(\mathbb R^{d}),$ we have
\begin{eqnarray}\label{d1}
\|(K\ast |f|^{2}) f\|_{M^{p,q}} \lesssim\|f\|_{M^{p,q}}^{3}.
\end{eqnarray}
\end{Lemma}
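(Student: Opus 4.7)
The plan is to first apply the $\mathcal{F}L^{1}$-module estimate \eqref{mp} from Theorem \ref{pl}, which gives
$$\|(K\ast|f|^2)f\|_{M^{p,q}}\lesssim \|K\ast|f|^2\|_{\mathcal{F}L^1}\,\|f\|_{M^{p,q}},$$
and so reduces \eqref{d1} to proving $\|K\ast|f|^2\|_{\mathcal{F}L^1}\lesssim \|f\|_{M^{p,q}}^2$. The left-hand side equals $\|\widehat{K}\,\widehat{|f|^2}\|_{L^1}$; using the frequency decomposition \eqref{dc} of $\widehat{K}$, I split this as $\|k_1\widehat{|f|^2}\|_{L^1}+\|k_2\widehat{|f|^2}\|_{L^1}$ and apply H\"older separately to each piece.

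Before doing the splitting I would harvest two integrability estimates on $\widehat{|f|^2}$ that follow cheaply from the hypotheses $p\le 2$ and $q<2d/(d+\gamma)<2$. Since $M^{p,q}\hookrightarrow M^{2,q}\hookrightarrow L^2$ by Lemma \ref{rl}\eqref{ir} and \eqref{el} (the latter using $q\le 2$), one has $|f|^2\in L^1$ with $\||f|^2\|_{L^1}=\|f\|_{L^2}^2\lesssim\|f\|_{M^{p,q}}^2$, whence $\widehat{|f|^2}\in L^\infty$ with $\|\widehat{|f|^2}\|_{L^\infty}\lesssim\|f\|_{M^{p,q}}^2$. Similarly, Lemma \ref{rl}\eqref{ir} and \eqref{rcs} give $M^{p,q}\hookrightarrow M^{2,q}\hookrightarrow\mathcal{F}L^q$, so $\widehat{f}\in L^q$; writing $\widehat{|f|^2}=\widehat{f}\ast\widehat{\bar f}$ and invoking Young's convolution inequality yields $\widehat{|f|^2}\in L^{q/(2-q)}$ with norm controlled by $\|f\|_{M^{p,q}}^2$.

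For the low-frequency piece I pair $k_1\in L^1(\{|\xi|\le 1\})$ (which is integrable on the unit ball precisely because $\gamma>0$) against $\widehat{|f|^2}\in L^\infty$. For the high-frequency piece I pair $\widehat{|f|^2}\in L^{q/(2-q)}$ against $k_2\in L^{s}(\{|\xi|>1\})$ for the H\"older-conjugate exponent $s=\bigl(q/(2-q)\bigr)'=q/(2(q-1))$ when $q>1$, with $s=\infty$ in the degenerate case $q=1$. The main obstacle, and the step where the precise range on $q$ enters, is verifying $\|k_2\|_{L^s(\{|\xi|>1\})}<\infty$, equivalently $s(d-\gamma)>d$. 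A direct algebraic rearrangement shows that this inequality is in fact equivalent to $q<2d/(d+\gamma)$, so the hypothesis is sharp for this approach. Combining the two H\"older bounds then yields $\|\widehat{K}\,\widehat{|f|^2}\|_{L^1}\lesssim \|f\|_{M^{p,q}}^2$, and the lemma follows.
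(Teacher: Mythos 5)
Your proof is correct and follows essentially the same route as the paper's: the $\mathcal{F}L^1$-module estimate \eqref{mp}, the low/high frequency splitting \eqref{dc} of $\widehat K$, an $L^1\times L^\infty$ pairing for the $k_1$ piece, and Young's inequality applied to $\widehat{|f|^2}=\widehat{f}\ast\widehat{\bar f}$ together with the embeddings of Lemma \ref{rl} for the $k_2$ piece. The only (cosmetic) difference is that you choose the H\"older exponent for $k_2$ directly as $\bigl(q/(2-q)\bigr)'=q/(2(q-1))$, whereas the paper introduces an auxiliary $r$ with $d/(d-\gamma)<r\le 2$ and requires $q\le 2r/(2r-1)$; these parametrizations coincide, and your algebra correctly identifies $q<2d/(d+\gamma)$ as the exact condition for the high-frequency piece to close.
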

\begin{proof}
By \eqref{mp}  and  \eqref{dc}, we have
\begin{eqnarray}
\|(K\ast |f|^{2}) f)\|_{M^{p,q}} & \lesssim  & \|K\ast |f|^{2}\|_{\mathcal{F}L^{1}} \|f\|_{M^{p,q}}\nonumber \\
& \lesssim & \left(\|k_{1} \widehat{|f|^{2}}\|_{L^{1}} + \|k_{2} \widehat{|f|^{2}}\|_{L^{1}} \right) \|f\|_{M^{p,q}}. \label{md1}
\end{eqnarray}
By \eqref{dc} and Lemma \ref{pl}\eqref{el}, we have
\begin{eqnarray}
\|k_1 \widehat{|f|^2}\|_{L^1}  & \lesssim &  \|k_1\|_{L^1} \|\widehat{|f|^2}\|_{L^{\infty}} \nonumber \\
& \lesssim & \||f|^2\|_{L^1}= \|f\|_{L^2}^2 \nonumber \\
& \lesssim & \|f\|_{M^{p,q}}^2.\label{md2}
\end{eqnarray}
Let $1< \frac{d}{d-\gamma}<r\leq 2, \frac{1}{r}+ \frac{1}{r'}=1.$  Note that $\frac{1}{r'}+1= \frac{1}{r_1}+ \frac{1}{r_2},$ where $r_1=r_2:= \frac{2r}{2r-1} \in [1,2]$, and $r_1' \in [2, \infty]$ where $\frac{1}{r_1}+ \frac{1}{r_1^{'}}=1.$ 
Now  using  Young's inequality for convolution, Lemma \ref{rl} \eqref{rcs}, Lemma \ref{rl} \eqref{ir}, and Lemma \ref{rl} \eqref{ic},  we obtain
\begin{eqnarray}
\|k_2\widehat{|f|^2} \|_{L^1} & \leq & \|k_2\|_{L^r}  \|\widehat{|f|^2}\|_{L^{r'}} \nonumber\\
& \lesssim &  \|\hat{f} \ast \hat{\bar{f}}\|_{L^{r'}}\nonumber\\
& \lesssim & \|\hat{f}\|_{L^{r_1}} \|\hat{\bar{f}}\|_{L^{r_1}} \nonumber \\
& \lesssim &  \|f\|^2_{M^{\min\{r_1', 2\}, r_1}}  \lesssim \|f\|^2_{M^{2, r_1}}\nonumber
\end{eqnarray}
Since  $f:[\frac{d}{d-\gamma}, \infty]\to \mathbb R, f(r) = \frac{2r}{2r-1}$ is a decreasing function, by  Lemma \ref{rl}\eqref{ir}, we have
\begin{eqnarray}
\|k_2 \widehat{|f|^2}\|_{L^1}\lesssim  \|f\|^2_{M^{2, r_1}} \lesssim \|f\|^2_{M^{p,q}}. \label{md3}
\end{eqnarray}
Combining \eqref{md1}, \eqref{md2}, and  \eqref{md3}, we obtain \eqref{d1}.
\end{proof}
\begin{Lemma}\label{cl}
Let  $0<\gamma <d,$ and $1\leq p \leq 2, 1\leq q < \frac{2d}{d+\gamma}.$ For any $ f,g \in M^{p,q}(\mathbb R^{d})$, we have
$$\| (K\ast |f|^{2})f - (K\ast |g|^{2})g\|_{M^{p,q}} \lesssim  (\|f\|_{M^{p,q}}^{2}+\|f\|_{M^{p,q}}\|g\|_{M^{p,q}}+ \|g\|_{M^{p,q}}^{2}) \|f-g\|_{M^{p,q}}.$$
\end{Lemma}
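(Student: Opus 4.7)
The plan is to mimic the proof of Lemma \ref{iml} after expressing the difference through a polarization identity. First I would use the algebraic identity
\[
(K\ast |f|^{2})f - (K\ast |g|^{2})g = (K\ast |f|^{2})(f-g) + \bigl(K\ast (|f|^{2}-|g|^{2})\bigr)g,
\]
together with $|f|^{2}-|g|^{2} = f\,\overline{(f-g)} + \bar g (f-g)$, to split the left-hand side into three pieces, each of the form $\bigl(K\ast (h_{1}\bar h_{2})\bigr) h_{3}$, where $h_{1},h_{2},h_{3}\in\{f,\, g,\, f-g\}$.

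Next, for each piece, I would invoke the $\mathcal{F}L^{1}$-module property \eqref{mp} from Theorem \ref{pl}, namely
\[
\bigl\|\bigl(K\ast (h_{1}\bar h_{2})\bigr)\, h_{3}\bigr\|_{M^{p,q}} \lesssim \bigl\|K\ast (h_{1}\bar h_{2})\bigr\|_{\mathcal{F}L^{1}} \|h_{3}\|_{M^{p,q}}.
\]
This reduces matters to the bilinear estimate $\|K\ast (h_{1}\bar h_{2})\|_{\mathcal{F}L^{1}} \lesssim \|h_{1}\|_{M^{p,q}}\|h_{2}\|_{M^{p,q}}$, a polarized version of the trilinear estimate proved in Lemma \ref{iml}.

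Then I would establish this bilinear estimate by repeating verbatim the argument of Lemma \ref{iml} with $|f|^{2}$ replaced by $h_{1}\bar h_{2}$. Using the decomposition $\widehat K = k_{1}+k_{2}$ from \eqref{dc}: for the low-frequency part, H\"older combined with Cauchy--Schwarz yields $\|k_{1} \widehat{h_{1}\bar h_{2}}\|_{L^{1}} \lesssim \|k_{1}\|_{L^{1}} \|h_{1}\bar h_{2}\|_{L^{1}} \leq \|k_{1}\|_{L^{1}} \|h_{1}\|_{L^{2}}\|h_{2}\|_{L^{2}}$, which is controlled by $\|h_{1}\|_{M^{p,q}}\|h_{2}\|_{M^{p,q}}$ via Lemma \ref{rl}\eqref{el}; for the high-frequency part, Young's convolution inequality with the exponents $r,r_{1}$ used in Lemma \ref{iml} gives $\|k_{2} \widehat{h_{1}\bar h_{2}}\|_{L^{1}} \lesssim \|k_{2}\|_{L^{r}} \|\hat h_{1} \ast \widehat{\bar h_{2}}\|_{L^{r'}} \lesssim \|\hat h_{1}\|_{L^{r_{1}}}\|\widehat{\bar h_{2}}\|_{L^{r_{1}}} \lesssim \|h_{1}\|_{M^{2,r_{1}}}\|h_{2}\|_{M^{2,r_{1}}} \lesssim \|h_{1}\|_{M^{p,q}}\|h_{2}\|_{M^{p,q}}$, where the last two inequalities use Lemma \ref{rl}\eqref{rcs}, \eqref{ir}, and invariance under complex conjugation \eqref{ic}.

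Applying these bounds to the three pieces and grouping like terms yields the claimed trilinear difference estimate. I do not anticipate a serious obstacle: the entire argument is a straightforward polarization of Lemma \ref{iml}, and the only point to monitor is that replacing $f$ by $f-g$ or taking complex conjugates does not affect any of the modulation-space norms, which is guaranteed by Lemma \ref{rl}\eqref{ic}.
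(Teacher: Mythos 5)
Your proposal is correct and follows essentially the same route as the paper: the same algebraic splitting of the difference, the $\mathcal{F}L^{1}$-module property \eqref{mp}, the decomposition $\widehat{K}=k_{1}+k_{2}$ from \eqref{dc}, and the Young/Hausdorff--Young plus embedding argument of Lemma \ref{iml}. The only (cosmetic) difference is that you polarize fully into three symmetric trilinear pieces up front, whereas the paper keeps the term $\bigl(K\ast(|f|^{2}-|g|^{2})\bigr)g$ intact and splits $|f|^{2}-|g|^{2}$ inside the $\mathcal{F}L^{r'}$ estimate; both yield the stated bound.
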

\begin{proof}
By exploiting the ideas from the proof of Lemma \ref{iml}, we obtain
\begin{eqnarray}
\|(K\ast |f|^{2})(f-g)\|_{M^{p,q}} & \lesssim & \|K\ast |f|^{2}\|_{\mathcal{F}L^{1}} \|f-g\|_{M^{p,q}}\nonumber \\
& \lesssim &  \left( \|k_{1} \widehat{|f|^{2}}\|_{L^{1}} + \|k_{2} \widehat{|f|^{2}}\|_{L^{1}}  \right) \|f-g\|_{M^{p,p}}\nonumber \\
& \lesssim &  \|f\|^{2}_{M^{p,q}} \|f-g\|_{M^{p,q}}.\label{a1}
\end{eqnarray}
Let $1\leq s < \frac{d}{d-\gamma}<t\leq 2, \frac{1}{s}+\frac{1}{s'}=1, \frac{1}{t}+\frac{1}{t'}=1.$
We note that
\begin{eqnarray}
\|(K \ast (|f|^{2}- |g|^{2}))g\|_{M^{p,q}}  & \lesssim & \|K \ast (|f|^{2}-|g|^{2})\|_{\mathcal{F}L^{1}} \|g\|_{M^{p,q}}\nonumber \\
& \lesssim & \left( \|k_1\|_{L^s}   \| \widehat{ |f|^{2}- |g|^{2}}\|_{L^{s'}} + \|k_2\|_{L^t} \| \widehat{|f|^{2}- |g|^{2}}\|_{L^{t'}} \right) \nonumber \\
&&  \|g\|_{M^{p,p}}\nonumber \\ 
& \lesssim &  \left( \| \widehat{ |f|^{2}- |g|^{2}}\|_{L^{s'}} + \| \widehat{|f|^{2}- |g|^{2}}\|_{L^{t'}} \right)  \|g\|_{M^{p,q}}\label{a2}
\end{eqnarray}
Let $1\leq r \leq 2,$ and  $\frac{1}{r}+ \frac{1}{r'}=1.$  Note that $\frac{1}{r'}+1= \frac{1}{r_1}+ \frac{1}{r_2},$ where $r_1=r_2:= \frac{2r}{2r-1} \in [1,2]$, and $r_1' \in [2, \infty]$ where $\frac{1}{r_1}+ \frac{1}{r_1^{'}}=1.$   Now  using  Young's inequality for convolution,  and  exploiting ideas performed as in the proof of Lemma \ref{iml}, we obtain 
\begin{eqnarray*}
\| |f|^{2}- |g|^{2}\|_{ \mathcal{F}L^{r'}} & \lesssim  & \|(f-g)\bar{f} \|_{\mathcal{F}L^{r'}} + \| g(\bar{f}-\bar{g}\|_{\mathcal{F}L^{r'}}\\
& = & \|\widehat{(f-g)} \ast \hat{\bar{f}}\|_{L^{r'}} + \|\hat{g} \ast  \widehat{\overline{f-g}}\|_{L^{r'}}\\
& \lesssim &  \|f-g\|_{\mathcal{F}L^{r_1}}\|\bar{f}\|_{\mathcal{F}L^{r_1}} +  \|g\|_{\mathcal{F}L^{r_1}} \|\overline{f-g}\|_{\mathcal{F}L^{r_1}}\\
& \lesssim &  (\|f\|_{M^{2, r_1}} + \|g\|_{M^{2, r_1}})  \|f-g\|_{M^{2, r_1}}\\
& \lesssim & (\|f\|_{M^{p,q}} + \|g\|_{M^{p,q}})  \|f-g\|_{M^{p,q}}.
\end{eqnarray*}
Using this,  \eqref{a2} gives
\begin{eqnarray}
\|(K \ast (|f|^{2}- |g|^{2}))g\|_{M^{p,q}} &  \lesssim & \left( \|f\|_{M^{p,q}} + \|g\|_{M^{p,q}} \right)  \|g\|_{M^{p,q}}\nonumber \\
&&\|f-g\|_{M^{p,q}}\label{al}
\end{eqnarray}
Now taking the identity
$$(K\ast |f|^{2})f- (K\ast |g|^{2})g= (K\ast |f|^{2})(f-g) + (K \ast (|f|^{2}- |g|^{2}))g $$
into our  account, \eqref{a1} and \eqref{al}  gives the desired inequality.
\end{proof}
\begin{proof}[Proof of Theorem \ref{MT}\eqref{MTF}]
By Duhamel's formula, we note that \eqref{fHTE} can be written in the equivalent form
\begin{equation}\label{df1}
u(\cdot, t)= U(t-t_0)u_{0}-i\mathcal{A}F(u)
\end{equation}
where
\begin{equation}
\  (\mathcal{A}v)(t,x)=\int_{t_0}^{t}U(t- \tau)\, v(t,x) \, d\tau.
\end{equation}
For simplicity, we assume that $t_0=0$ and prove the local existence on $[0,T]$. Similar arguments also applies to interval of the form $[-T',0]$ for proving local solutions. 

We consider now the mapping
\begin{equation}
\mathcal{J}(u)= U(t)u_{0}-i\int_{0}^{t}U(t-\tau) \, [(K\ast |u|^{2}(\tau))u(\tau)] \, d\tau.
\end{equation}
By Proposition \ref{uf},  we have 
\begin{equation}\label{a}
\left\|U(t)u_{0}\right\|_{M^{p,q}} \leq C  (1+t)^{d\left| \frac{1}{p}-\frac{1}{2} \right|} \left\|u_{0}\right\|_{M^{p,q}}
\end{equation}
for $ t \in \mathbb R$, and where $C$ is a universal constant depending only on $d.$\\
\noindent
By Minkowski's inequality for integrals, Proposition \ref{uf}, and  Lemma \ref{iml}, we obtain

\begin{eqnarray}
\label{ed}
\left\| \int_{0}^{t} U(t-\tau) [(K\ast |u|^{2}(\tau)) u(\tau)]  \, d\tau \right\|_{M^{p,q}} 
   &\leq & T C_{T} \, \|(K\ast |u|^{2}(t)) u(t)\|_{M^{p,q}} \nonumber \\
   & \leq & TC_T \|u(t)\|_{M^{p,q}}^{3}\label{e1}
\end{eqnarray}
where $C_{T}= C (1+t)^{d\left| \frac{1}{p}-\frac{1}{2} \right|}.$\\
By \eqref{a} and \eqref{e1}, we have
\begin{eqnarray}\label{tac}
\|\mathcal{J}u\|_{C([0, T], M^{p,q})} \leq  C_{T} \left(\|u_{0}\|_{M^{p,q}} + c T \|u\|_{M^{p,q}}^{3}\right),
\end{eqnarray}
for some universal constant $c.$

 For $M>0$, put  $B_{T, M}= \{u\in C([0, T], M^{p,q}(\mathbb R^{d})):\|u\|_{C([0, T], M^{p,q})}\leq M \}$,  which is the  closed ball  of radius $M$, and centered at the origin in  $C([0, T], M^{p,q}(\mathbb R^{d}))$.  
Next, we show that the mapping $\mathcal{J}$ takes $B_{T, M}$ into itself for suitable choice of  $M$ and small $T>0$. Indeed, if we let, $M= 2C_{T}\|u_{0}\|_{M^{p,q}}$ and $u\in B_{T, M},$ from \eqref{tac} we obtain 
\begin{eqnarray}
\|\mathcal{J}u\|_{C([0, T], M^{p,q})} \leq  \frac{M}{2} + cC_{T}T M^{3}.
\end{eqnarray}
We choose a  $T$  such that  $c C_{T}TM^{2} \leq 1/2,$ that is, $T \leq \tilde{T}(\|u_0\|_{M^{p,q}}, d, \gamma)$ and as a consequence  we have
\begin{eqnarray}
\|\mathcal{J}u\|_{C([0, T], M^{p,q})} \leq \frac{M}{2} + \frac{M}{2}=M,
\end{eqnarray}
that is, $\mathcal{J}u \in B_{T, M}.$
By Lemma \ref{cl}, and the arguments as before, we obtain
\begin{eqnarray}
\|\mathcal{J}u- \mathcal{J}v\|_{C([0, T], M^{p,q})} \leq \frac{1}{2} \|u-v\|_{C([0, T], M^{p,q})}.
\end{eqnarray}
Therefore, using the  Banach's contraction mapping principle, we conclude that $\mathcal{J}$ has a fixed point in $B_{T, M}$ which is a solution of \eqref{df1}. 

Now we shall see that the solution constructed before is global in time.  In fact, in view of Proposition \ref{miD}, 
to prove Theorem \ref{MT}\eqref{MTF}, it suffices to prove  that the modulation space norm of $u$, that is, $\|u\|_{M^{p,q}}$ cannot become unbounded in finite time.
In view of \eqref{dc} and to use the Hausdorff-Young inequality we let $1< \frac{d}{d-\gamma} <q \leq 2,$ 
and we obtain
\begin{eqnarray}
\|u(t)\|_{M^{p,q}} & \lesssim &  C_{T} \left( \|u_{0}\|_{M^{p,q}} + \int_{0}^{t} \|(K\ast |u(\tau)|^{2}) u(\tau)\|_{M^{p,q}}d\tau \right)\nonumber \\
& \lesssim &  C_{T} \left ( \|u_{0}\|_{M^{p,q}} + \int_{0}^{t} \|K\ast |u(\tau)|^{2}\|_{\mathcal{F}L^{1}} \|u(\tau)\|_{M^{p,q}} d\tau \right) \nonumber\\
& \lesssim & C_{T}  \|u_{0}\|_{M^{p,q}} + C_{T}\int_{0}^{t} \left( \|k_{1}\|_{L^{1}} \|u(\tau)\|_{L^{2}}^{2}+ \|k_{2}\|_{L^{q}} \|\widehat{|u(\tau)|^{2}}\|_{L^{q'}}
\right) \nonumber \\
&&\|u(\tau)\|_{M^{p,q}} d\tau \nonumber\\
& \lesssim & C_{T}\|u_{0}\|_{M^{p,q}} + C_{T} \int_{0}^{t} \left(  \|k_{1}\|_{L^{1}} \|u_{0}\|_{L^{2}}^{2}+ \|k_{2}\|_{L^{q}} \||u(\tau) |^{2}\|_{L^{q}}\right) \nonumber  \\
&& \|u(\tau)\|_{M^{p,q}} d\tau\nonumber\\
& \lesssim & C_{T}\|u_{0}\|_{M^{p,q}}+ C_{T} \int_{0}^{t}\|u(\tau)\|_{M^{p,q}} d\tau  \nonumber \\
&& + C_{T} \int_{0}^{t} \|u(\tau)\|_{L^{2q}}^{2} \|u(\tau)\|_{M^{p,q} } d\tau,\nonumber
 \end{eqnarray}
 where we have used  \eqref{mp},  H\"older's inequality, and  the conservation of the $L^{2}-$norm of $u$. \\
We note that the requirement on $q$ can be fulfilled if and only if $0<\gamma <d/2.$ To apply Proposition \ref{mi}, we let $\beta>1$ and $(2\beta, 2 q)$ is  $\alpha-$fractional admissible, that is, $\frac{\alpha}{2\beta}= d \left(\frac{1}{2}- \frac{1}{2q} \right)$ such that $\frac{1}{\beta}= \frac{d}{\alpha} \left( 1 - \frac{1}{q} \right)<1.$ This is possible provided $\frac{q-1}{q} < \frac{\alpha}{d}:$ this condition is compatible with the requirement $q> \frac{d}{d-\gamma}$ if and only if $\gamma < \alpha.$
 Using the H\"older's inequality for the last integral, we obtain
\begin{eqnarray*}
\|u(t)\|_{M^{p,q}} &  \lesssim  & C_{T}\|u_0\|_{M^{p,q}} + C_{T}\int_{0}^{t} \|u(\tau)\|_{M^{p,q}} d\tau \\
&& + C_{T}\|u\|_{L^{2\beta}([0, T], L^{2q})}^{2}\|u\|_{L^{\beta'}[0, T], M^{p,q})},
\end{eqnarray*}
where $\beta'$ is the H\"older conjugate exponent of $\beta.$
Put,
$$h(t):=\sup_{0 \leq \tau \leq t} \|u(\tau)\|_{M^{p,q}}.$$
For a given $T>0,$ $h$ satisfies an estimate of the form,
$$h(t)\lesssim C_{T}\|u_{0}\|_{M^{p,q}}+ C_{T}\int_{0}^{t} h(\tau) d\tau + C_{T} C_0(T) \left( \int_{0}^{t}h(\tau)^{\beta'} d\tau \right)^{\frac{1}{\beta'}},$$
provided that $0 \leq t \leq T,$ and where we have used the fact that $\beta'$ is finite.
Using the H\"older's inequality we infer that,
$$h(t)\lesssim  C_{T} \|u_{0}\|_{M^{p,q}} + C_{1}(T) \left(\int_{0}^{t} h(\tau)^{\beta'}d \tau \right)^{\frac{1}{\beta'}}.$$
Raising the above estimate to the power $\alpha'$, we find that
$$h(t)^{\beta'} \lesssim  C_{2}(T) \left( 1+\int_{0}^{t} h(\tau)^{\beta'} d\tau\right).$$ 
In view of  Gronwall inequality, one may conclude that  $h\in L^{\infty}([0, T]).$ Since $T>0$ is arbitrary, $h\in L^{\infty}_{loc}(\mathbb R),$ and  the proof of Theorem \ref{MT}\eqref{MTF}  follows.
\end{proof}
\begin{proof}[Proof of Theorem \ref{MT}\eqref{MTS}]  Taking Lemmas \ref{iml} and \ref{cl}, and Proposition \ref{carlep}  into account, and exploiting ideas from Theorem \ref{MT}\eqref{MTF},  we can produce the proof.
\end{proof}
\subsection{Global well-posednes in  $\mathcal{F}L^1(\mathbb R^d) \cap L^2(\mathbb R^d)$} In this subsection, we shall prove Theorem \ref{jcm}.
The following lemma is easy to observe:
\begin{Lemma} \label{of} Let $f,g \in \mathcal{F}L^1(\mathbb R^d)\cap  L^2(\mathbb R^d).$
\noindent
\begin{enumerate}
\item \label{of1} The space $\mathcal{F}L^1(\mathbb R^d)$ is an algebra under point wise multiplication, with  norm inequality
$$\|fg\|_{\mathcal{F}L^1} \leq \|f\|_{\mathcal{F}L^1} \|g\|_{\mathcal{F}L^1}.$$
\item \label{of2} For all $t\in \mathbb R,$  the fractional Schr\"odinger propagator $e^{it(-\Delta)^{\alpha/2}}$  is unitary on $\mathcal{F}L^1(\mathbb R^d).$
\item \label{of3}  Let  $K$ be given by \eqref{hk} with $\lambda \in \mathbb R,$ and $ 0<\gamma < d$. Then 
$$\| (K\ast |f|^{2})f - (K\ast |g|^{2})g\|_{L^2\cap \mathcal{F}L^1} \lesssim  (\|f\|_{L^2\cap \mathcal{F}L^1}^{2}+ \|g\|_{L^2 \cap \mathcal{F}L^1}^{2}) \|f-g\|_{L^2\cap \mathcal{F}L^1}.$$
\end{enumerate}
\end{Lemma}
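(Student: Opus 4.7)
The plan is to handle the three assertions in order. For \eqref{of1}, I would pass to the Fourier side via the convolution theorem $\widehat{fg}=\hat f*\hat g$ and apply Young's inequality $\|\hat f*\hat g\|_{L^1}\le\|\hat f\|_{L^1}\|\hat g\|_{L^1}$. For \eqref{of2}, on the Fourier side the propagator $U(t)$ acts as multiplication by the unimodular symbol $e^{i\pi t|\xi|^{\alpha}}$, so $\|\widehat{U(t)f}\|_{L^1}=\|\hat f\|_{L^1}$, giving the isometry on $\mathcal{F}L^1$.

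The substantive content is \eqref{of3}. The plan is to mirror the proof of Lemma \ref{cl}: first reduce via the algebraic identity
\[
(K*|f|^{2})f-(K*|g|^{2})g=(K*|f|^{2})(f-g)+(K*(|f|^{2}-|g|^{2}))g,
\]
together with the splitting $|f|^{2}-|g|^{2}=(f-g)\bar f+g\,\overline{(f-g)}$, to a sum of terms of the form $(K*h_1)h_2$. In the $\mathcal{F}L^1$ norm, \eqref{of1} gives $\|(K*h_1)h_2\|_{\mathcal{F}L^1}\le\|K*h_1\|_{\mathcal{F}L^1}\|h_2\|_{\mathcal{F}L^1}$. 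In the $L^2$ norm, I would use
\[
\|(K*h_1)h_2\|_{L^2}\le\|K*h_1\|_{L^{\infty}}\|h_2\|_{L^2}\le\|\widehat{K*h_1}\|_{L^1}\|h_2\|_{L^2}=\|K*h_1\|_{\mathcal{F}L^1}\|h_2\|_{L^2}.
\]
Thus both components of the $L^2\cap\mathcal{F}L^1$ norm are dominated once $\|K*h_1\|_{\mathcal{F}L^1}$ is controlled.

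For the remaining estimate on $\|K*h\|_{\mathcal{F}L^1}=\|\hat K\,\hat h\|_{L^1}$ when $h\in L^1\cap\mathcal{F}L^1$, the plan is to use the decomposition $\hat K=k_1+k_2$ from \eqref{dc}, noting that $k_1\in L^1$ (the singularity $|\xi|^{-(d-\gamma)}$ near $0$ is integrable for any $\gamma>0$) while $k_2\in L^{\infty}$ (being bounded away from the origin). H\"older then gives
\[
\|\hat K\,\hat h\|_{L^1}\le\|k_1\|_{L^1}\|\hat h\|_{L^{\infty}}+\|k_2\|_{L^{\infty}}\|\hat h\|_{L^1}\lesssim \|h\|_{L^1}+\|h\|_{\mathcal{F}L^1}.
\]
Applying this to $h$ of the form $f_1 f_2$ with $f_1,f_2\in L^2\cap\mathcal{F}L^1$, Cauchy--Schwarz controls $\|h\|_{L^1}\le\|f_1\|_{L^2}\|f_2\|_{L^2}$ and \eqref{of1} controls $\|h\|_{\mathcal{F}L^1}\le\|f_1\|_{\mathcal{F}L^1}\|f_2\|_{\mathcal{F}L^1}$, producing the stated trilinear bound. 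I do not foresee a real obstacle: the elementary inequality $\|\cdot\|_{L^{\infty}}\le\|\cdot\|_{\mathcal{F}L^1}$ frees us from doing any delicate analysis directly on the $L^2$ side, and using only the trivial $L^1$ and $L^{\infty}$ endpoints for $k_1$ and $k_2$ avoids the interpolation that would otherwise restrict $\gamma$ further.
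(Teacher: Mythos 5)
Your proof is correct. Parts \eqref{of1} and \eqref{of2} are handled exactly as in the paper (Young's inequality on the Fourier side, and unimodularity of the symbol $e^{i\pi t|\xi|^{\alpha}}$, respectively). For part \eqref{of3} the paper does not give an argument at all: it simply cites \cite[Lemma 3.1]{carle}. Your write-up supplies a complete, self-contained proof, and it is in fact the same mechanism used both in that reference and in the paper's own Lemmas \ref{iml} and \ref{cl}: the algebraic splitting of the difference of nonlinearities, followed by the decomposition \eqref{dc} of $\widehat K$ into $k_1=\chi_{\{|\xi|\le1\}}\widehat K\in L^1$ and $k_2=\chi_{\{|\xi|>1\}}\widehat K\in L^\infty$. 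The one genuinely clean choice you make is to take the extreme endpoints $L^1$ and $L^\infty$ for $k_1$ and $k_2$, pairing them with $\|\hat h\|_{L^\infty}\le\|h\|_{L^1}$ (controlled by Cauchy--Schwarz and the $L^2$ norms) and $\|\hat h\|_{L^1}$ (controlled by the algebra property); this avoids the H\"older/Hausdorff--Young exponent bookkeeping that the modulation-space versions in Lemmas \ref{iml} and \ref{cl} require, and is precisely why the full range $0<\gamma<d$ suffices here. The observation $\|(K\ast h_1)h_2\|_{L^2}\le\|K\ast h_1\|_{\mathcal FL^1}\|h_2\|_{L^2}$ correctly reduces the $L^2$ component to the same estimate, so both components of the $L^2\cap\mathcal FL^1$ norm close, and the cross terms $\|f\|\,\|g\|$ are absorbed into $\|f\|^2+\|g\|^2$ as claimed.
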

\begin{proof}  The proof of statement \eqref{of1} follows form the  Young's inequality for the convolution. Next we note that $\|e^{it (-\Delta)}f\|_{\mathcal{F}L^1}= \|e^{i\pi t |\xi|^{\alpha}} \hat{f}\|_{L^1}=\|f\|_{\mathcal{F}L^1}.$ This completes the proof of statement  \eqref{of2}.  For the proof of statement  \eqref{of3}, see \cite[Lemma 3.1]{carle}.
\end{proof}
Using Lemma \ref{of}, and standard fixed point arguments,  following proposition is easy to prove, and the proof is left to the reader.
\begin{Proposition}\label{lfo} Let  $K$ be given by \eqref{hk} with $\lambda \in \mathbb R,$ and $ 0<\gamma < d$. If $u_0\in L_{rad}^2(\mathbb R^d)\cap \mathcal{F}L^1(\mathbb R^d),$ then there exists $T>0$ depending only on $\lambda, \gamma, d$ and $\|u_0\|_{L^2 \cap \mathcal{F}L^1}$ 
and a  unique solution of \eqref{fHTE}  such that $u \in C([0,T], L^2\cap \mathcal{F}L^1)$.
\end{Proposition}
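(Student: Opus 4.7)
The plan is to recast \eqref{fHTE} via Duhamel as the fixed point problem
\[
\Phi(u)(t) = U(t)u_0 - i\int_0^t U(t-\tau)\bigl[(K\ast |u|^2)u\bigr](\tau)\, d\tau,
\]
and run a standard Banach contraction argument in the Banach space $X := L^2(\mathbb R^d) \cap \mathcal{F}L^1(\mathbb R^d)$ equipped with the norm $\|f\|_X = \|f\|_{L^2} + \|f\|_{\mathcal{F}L^1}$. The functional framework will be the closed ball
\[
B_{T,M} = \bigl\{ u \in C([0,T], X_{rad}) : \|u\|_{L^\infty([0,T],X)} \leq M \bigr\},
\]
with $M = 2\|u_0\|_X$, which is a complete metric space with the induced distance.

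First I would observe that $U(t)$ is an isometry on $X$: on $L^2$ this is Plancherel, and on $\mathcal{F}L^1$ this is exactly Lemma \ref{of}\eqref{of2}. Hence $\|U(t)u_0\|_X = \|u_0\|_X$, and moreover $U(t)$ preserves radial symmetry since the multiplier $e^{i\pi t|\xi|^\alpha}$ depends only on $|\xi|$; the cubic Hartree nonlinearity $(K\ast|u|^2)u$ likewise preserves radial symmetry because $K$ is radial. Therefore $\Phi$ maps $C([0,T],X_{rad})$ into itself.

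Next I would apply Lemma \ref{of}\eqref{of3} with $g=0$ to obtain the trilinear bound $\|(K\ast|u|^2)u\|_X \lesssim \|u\|_X^3$, from which
\[
\|\Phi(u)(t)\|_X \leq \|u_0\|_X + C T \|u\|_{L^\infty_T X}^3 \leq \tfrac{M}{2} + C T M^3.
\]
Choosing $T$ small enough (depending only on $\lambda, \gamma, d$ and $\|u_0\|_X$, via $M$) so that $CTM^2 \leq 1/2$ yields $\Phi(B_{T,M}) \subset B_{T,M}$. For the contraction property, I would apply Lemma \ref{of}\eqref{of3} directly to obtain
\[
\|\Phi(u)(t) - \Phi(v)(t)\|_X \leq \int_0^t C\bigl(\|u\|_X^2 + \|v\|_X^2\bigr)\|u-v\|_X\, d\tau \leq 2CTM^2\, \|u-v\|_{L^\infty_T X},
\]
which is strictly less than $\|u-v\|_{L^\infty_T X}$ for the same choice of $T$. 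Banach's fixed point theorem then furnishes a unique $u \in B_{T,M}$ solving the Duhamel equation, and uniqueness in the full space $C([0,T], L^2_{rad}\cap \mathcal{F}L^1)$ follows by the standard argument of comparing any two solutions on a possibly smaller time interval and iterating.

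There is no real obstacle here: the two ingredients one usually has to work to establish — unitarity of the propagator on the target space and an algebra/tame estimate for the nonlinearity — are handed to us by Lemma \ref{of}. The only point worth a line of care is the radial invariance, which as noted follows because both $U(t)$ and $K\ast(\cdot)$ are Fourier multipliers/convolutions by radial kernels.
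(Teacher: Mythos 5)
Your proposal is correct and coincides with what the paper intends: the paper explicitly leaves this proof to the reader, stating only that it follows from Lemma \ref{of} together with a standard fixed point argument, which is precisely what you carry out (isometry of $U(t)$ on $L^2\cap\mathcal{F}L^1$, the trilinear estimate from Lemma \ref{of}\eqref{of3}, and contraction on a ball of radius $2\|u_0\|_{L^2\cap\mathcal{F}L^1}$ for small $T$). The only cosmetic point is that your choice $CTM^2\leq 1/2$ makes the Lipschitz constant $2CTM^2\leq 1$, i.e.\ merely nonexpansive; take $CTM^2\leq 1/4$, say, to get a genuine contraction.
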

\begin{proof}[Proof of Theorem \ref{jcm}]  
Taking Proposition \ref{lfo}  and  Proposition \ref{miD} into account, 
to prove Theorem \ref{jcm}, it suffices to prove  that the Fourrier algebra norm of $u$, that is, $\|u\|_{\mathcal{F}L^1}$ cannot become unbounded in finite time.
In view of \eqref{dc} and to use the Hausdorff-Young inequality we let $1< \frac{d}{d-\gamma} <q \leq 2,$ 
and we obtain
\begin{eqnarray}
\|u(t)\|_{\mathcal{F}L^1} & \lesssim &   \|u_{0}\|_{\mathcal{F}L^1} + \int_{0}^{t} \|(K\ast |u(\tau)|^{2}) u(\tau)\|_{\mathcal{F}L^1}d\tau \nonumber \\
& \lesssim &    \|u_{0}\|_ {\mathcal{F}L^1} + \int_{0}^{t} \|K\ast |u(\tau)|^{2}\|_{\mathcal{F}L^{1}} \|u(\tau)\|_{\mathcal{F}L^1} d\tau  \nonumber\\
& \lesssim &   \|u_{0}\|_{\mathcal{F}L^1} + \int_{0}^{t} \left( \|k_{1}\|_{L^{1}} \|u(\tau)\|_{L^{2}}^{2}+ \|k_{2}\|_{L^{q}} \|\widehat{|u(\tau)|^{2}}\|_{L^{q'}}
\right) \nonumber \\
&&\|u(\tau)\|_{\mathcal{F}L^1} d\tau \nonumber\\
& \lesssim & \|u_{0}\|_{\mathcal{F}L^1} +  \int_{0}^{t} \left(  \|k_{1}\|_{L^{1}} \|u_{0}\|_{L^{2}}^{2}+ \|k_{2}\|_{L^{q}} \||u(\tau) |^{2}\|_{L^{q}}\right) \nonumber  \\
&& \|u(\tau)\|_{\mathcal{F}L^1} d\tau\nonumber\\
& \lesssim & \|u_{0}\|_{\mathcal{F}L^1}+  \int_{0}^{t}\|u(\tau)\|_{\mathcal{F}L^1} d\tau  +  \int_{0}^{t} \|u(\tau)\|_{L^{2q}}^{2} \|u(\tau)\|_{\mathcal{F}L^1} d\tau,\nonumber
 \end{eqnarray}
 where we have used  Lemma \ref{of},  H\"older's inequality, and  the conservation of the $L^{2}-$norm of $u$. \\
We note that the requirement on $q$ can be fulfilled if and only if $0<\gamma <d/2.$ To apply Proposition \ref{mi}, we let $\beta>1$ and $(2\beta, 2 q)$ is  $\alpha-$fractional admissible, that is, $\frac{\alpha}{2\beta}= d \left(\frac{1}{2}- \frac{1}{2q} \right)$ such that $\frac{1}{\beta}= \frac{d}{\alpha} \left( 1 - \frac{1}{q} \right)<1.$ This is possible provided $\frac{q-1}{q} < \frac{\alpha}{d}:$ this condition is compatible with the requirement $q> \frac{d}{d-\gamma}$ if and only if $\gamma < \alpha.$
 Using the H\"older's inequality for the last integral, we obtain
\begin{eqnarray*}
\|u(t)\|_{\mathcal{F}L^1} &  \lesssim  & \|u_0\|_{\mathcal{F}L^1} + \int_{0}^{t} \|u(\tau)\|_{\mathcal{F}L^1} d\tau \\
&& + C_{T}\|u\|_{L^{2\beta}([0, T], L^{2q})}^{2}\|u\|_{L^{\beta'}[0, T],  \mathcal{F}L^1)}
\end{eqnarray*}
where $\beta'$ is the H\"older conjugate exponent of $\beta.$
Put,
$$h(t):=\sup_{0 \leq \tau \leq t} \|u(\tau)\|_{\mathcal{F}L^1}.$$
For a given $T>0,$ $h$ satisfies an estimate of the form,
$$h(t)\lesssim \|u_{0}\|_{\mathcal{F}L^1}+ \int_{0}^{t} h(\tau) d\tau + C_0(T) \left( \int_{0}^{t}h(\tau)^{\beta'} d\tau \right)^{\frac{1}{\beta'}},$$
provided that $0 \leq t \leq T,$ and where we have used the fact that $\beta'$ is finite.
Using the H\"older's inequality we infer that,
$$h(t)\lesssim   \|u_{0}\|_{\mathcal{F}L^1} + C_{1}(T) \left(\int_{0}^{t} h(\tau)^{\beta'}d \tau \right)^{\frac{1}{\beta'}}.$$
Raising the above estimate to the power $\beta'$, we find that
$$h(t)^{\beta'} \lesssim  C_{2}(T) \left( 1+\int_{0}^{t} h(\tau)^{\beta'} d\tau\right).$$ 
In view of  Gronwall inequality, one may conclude that  $h\in L^{\infty}([0, T]).$ Since $T>0$ is arbitrary, $h\in L^{\infty}_{loc}(\mathbb R),$ and  the proof of Theorem \ref{jcm}  follows.
\end{proof}
\section{Concluding Remarks}
\begin{enumerate}
\item We have proved Theorem \ref{MT}  with range $1\leq p \leq 2, 1\leq q < \frac{2d}{d+\gamma},$ it would be interesting to know whether the range of $q$ can be improved in Theorem \ref{MT}.

\item  In view of Proposition \ref{uf}, and the fact that $\mathcal{F}L^1(\mathbb R^d) \hookrightarrow M^{\infty, 1}(\mathbb R^d),$ it would be interesting  to know whether the analogue of Theorem \ref{jcm} is true or not for the Cauchy data in $M^{\infty,1}(\mathbb R^d)\cap L_{rad}^2(\mathbb R^d).$
\end{enumerate}

\noindent
{\textbf{Acknowledgment}:} 
The author  is  thankful to   IUSTF and Indo-US SERB and DST-INSPIRE, and TIFR CAM for the  support.  The author is very grateful to Professor Kasso Okoudjou for his suggestions,  hospitality and   arranging  excellent research facilities at   the University of Maryland.


\begin{thebibliography}{99}
\bibitem{baho} H. Bahouri, J-Y. Chemin, and R. Danchin, Fourier analysis and nonlineare
partial differential equations, Grundlehren der Mathematischen Wissenschaften [Fundamental
Principles of Mathematical Sciences], vol. 343, Springer, Heidelberg, 2011.

\bibitem{benyi} \'A. B\'enyi, K. Gr\"ochenig, K. A. Okoudjou, and L. G. Rogers, Unimodular Fourier multipliers for modulation spaces, J. Funct. Anal. 246 (2) (2007) 366-384.

\bibitem{ambenyi} \'A. B\'enyi, Kasso A. Okoudjou, Local well-posedness of nonlinear dispersive equations on modulation spaces, Bull. Lond. Math. Soc. 41 (3) (2009) 549-558.


\bibitem{dgb}D.G. Bhimani, The Cauchy problem for the Hartree type equation in modulation spaces, Nonlinear Anal. 130 (2016), 190-201. 

\bibitem{cabre} X. Cabr\'e, Y.Sire, Nonlinear equations for fractional Laplacians, I: Regularity, maximum principles, and Hamiltonian estimates, 
Ann. Inst. H. Poincar\'e Anal. Non Lin\'eaire 31 (1) (2014), 23-53. 


\bibitem{carle} R. Carles, L. Mouzaoui, On the Cauchy problem for the Hartree type equation in the Wiener algebra,  Proc. Amer. Math. Soc. 142 (7) (2014) 2469-2482.


\bibitem{cks} J. Cunanan, M. Kobayashi,  M. Sugimoto, Mitsuru,
Inclusion relations between $L^p-$Sobolev and Wiener amalgam spaces,
J. Funct. Anal. 268 (1)(2015) 239-254. 

\bibitem{4a} Y. Cho, H. Hajaiej, G. Wang, T. Ozawa,  On the Cauchy problem of fractional Schr\"odinger equation with Hartree type nonlinearity, Funkcial. Ekvac. 56 (2) (2013),  193-224. 


\bibitem{chen} J. Chen,  D. Fan,  L. Sun, Asymptotic estimates for unimodular Fourier multipliers on modulation spaces, Discrete Contin. Dyn. Syst. 32  (2) (2012), 467-485. 

\bibitem{HG4} H.G. Feichtinger, Modulation spaces on locally compact abelian groups, Technical report, University of Vienna, 1983.

\bibitem{gro} K. Gr\"{o}chenig, Foundations of Time-Frequency Analysis. Birkh\"auser Boston, Inc., Boston, MA, 2001.


\bibitem{guo} Z. Guo,  Y. Wang,  Improved Strichartz estimates for a class of dispersive equations in the radial case and their applications to nonlinear Schr\"odinger and wave equations. J. Anal. Math. 124 (2014), 1-38. 


\bibitem{gz} Q. Guo, S.  Zhu, Sharp threshold of blow-up and scattering for the fractional Hartree equation, J. Differential Equations 264  (4) (2018),  2802-2832.


\bibitem{kt} M. Keel, T. Tao,  Endpoint Strichartz estimates,  Amer. J. Math. 120  (5) (1998), 955-980. 

\bibitem{laskin} N. Laskin, Fractional Schr\"odinger equation, 
Phys. Rev. E (3) 66 (5)  (2002)  056108, 7 pp.

\bibitem{rm} R. Manna,  Modulation spaces and non-linear Hartree type equations,
Nonlinear Anal. 162 (2017), 76-90. 


\bibitem{rsw} M. Ruzhansky, M. Sugimoto, B. Wang,  Modulation spaces and nonlinear evolution
equations,  Progr Math, 2012, 301: 267-283.


\bibitem{nonom} M. Sugimoto, N. Tomita, B. Wang, Remarks on nonlinear operations on modulation spaces, Integral Transforms Spec. Funct. 22(4-5) (2011) 351-358. 


\bibitem{wang2} B.Wang and H. Hudzik, The global Cauchy problem for the NLS and NLKG with small rough data, J. Differential Equations 232 (2007) 36-73.

\bibitem{bao} B. Wang, Z. Lifeng, and G. Boling, Isometric decomposition operators, function spaces $E^{\lambda}_{p,q}$ and applications to nonlinear evolution equations, J. Funct. Anal. 233 (1) (2006) 1-39.

\bibitem{baob} B. Wang, H. Zhaohui,
H. Chengchun, G. Zihua, Harmonic analysis method for nonlinear evolution equations. I. World Scientific Publishing Co. Pte. Lt., 2011.

\end{thebibliography}
\end{document}